\pdfoutput=1
\documentclass[11pt,oneside,reqno]{amsart}

\usepackage{amsthm}
\usepackage{amssymb}
\usepackage{amsmath}
\usepackage{amscd}
\usepackage{esint}
\usepackage{enumitem}
\usepackage{bbm}
\usepackage{mathrsfs}
\usepackage[top=1.1in, bottom=1in, left=1in, right=1in]{geometry}

\usepackage{cite}

\usepackage[dvipsnames]{xcolor}
\usepackage[colorlinks=true, pdfstartview=FitV, linkcolor=blue, citecolor=blue, urlcolor=blue]{hyperref}
\hypersetup{
  pdftitle={Onsager-Type Energy Equality and Prodi--Serrin Uniqueness for Nernst--Planck Fluid Systems},
  pdfauthor={Ruimeng Hu, Quyuan Lin, and Qirui Peng},
  pdfkeywords={Nernst--Planck equations; Euler equations; Navier--Stokes equations; Onsager conjecture; energy equality; Prodi--Serrin condition}
}

\newcommand{\lb}{\varLambda}

\def \<{\langle}
\def \>{\rangle}

\newcommand{\bg}{\begin{equation}}
\newcommand{\ed}{\end{equation}}
\newcommand{\bga}{\begin{eqnarray}}
\newcommand{\eda}{\end{eqnarray}}

\def\cbdu{\par{\raggedleft$\Box$\par}}

\newtheorem {Theorem}  {Theorem}
\newtheorem {Corollary}[Theorem]{\bf Corollary}

\numberwithin{Theorem}{section}

\newtheorem {Lemma}[Theorem]  {Lemma}

\theoremstyle{definition}
\newtheorem{Definition}[Theorem]{Definition}
\theoremstyle{remark}
\newtheorem{Remark}[Theorem]{\bf Remark}

\def \l{\lambda}
\expandafter\chardef\csname pre amssym.def
at\endcsname=\the\catcode`\@ \catcode`\@=11
\def\undefine#1{\let#1\undefined}
\def\newsymbol#1#2#3#4#5{\let\next@\relax
 \ifnum#2=\@ne\let\next@\msafam@\else
 \ifnum#2=\tw@\let\next@\msbfam@\fi\fi
 \mathchardef#1="#3\next@#4#5}
\def\mathhexbox@#1#2#3{\relax
 \ifmmode\mathpalette{}{\m@th\mathchar"#1#2#3}%
 \else\leavevmode\hbox{$\m@th\mathchar"#1#2#3$}\fi}
\def\hexnumber@#1{\ifcase#1 0\or 1\or 2\or 3\or 4\or 5\or 6\or 7\or 8\or
 9\or A\or B\or C\or D\or E\or F\fi}

\font\teneufm=eufm10 \font\seveneufm=eufm7 \font\fiveeufm=eufm5
\newfam\eufmfam
\textfont\eufmfam=\teneufm \scriptfont\eufmfam=\seveneufm
\scriptscriptfont\eufmfam=\fiveeufm

\catcode`\@=\csname pre amssym.def at\endcsname

\newcounter{remark}
\numberwithin{equation}{section}
\numberwithin{figure}{section}

\def \grad {\nabla}

\def \onethird {\frac{1}{3}}
\def \mone  {{-1}}

\newcommand{\divv}{{\text {div}}\,}

\newcommand{\e}{\epsilon}

\newcommand{\om}{\omega}

\renewcommand{\l}{\lambda}

\renewcommand{\th}{\theta}

\newcommand{\s}{\sigma}

\newcommand{\R}{\mathbf{R}}

\newcommand{\les}{\lesssim}
\renewcommand{\div}{\mbox{div}}

\newcommand{\Bb}{{\mathcal B}}

\newcommand{\Ff}{{\mathcal F}}

\newcommand{\Ss}{{\mathcal S}}

\newcommand{\barphi}{\overline{\Phi}}
\newcommand{\barrho}{\overline{\rho}}
\newcommand{\barsgm}{\overline{\sigma}}

\newcommand{\IdM}{\textit{Id}}

\newcommand{\wth}{\widetilde{h}}

\def  \R   {{\mathbb R}}
\def  \Z   {{\mathbb Z}}

\def  \N   {{\mathbb N}}

\def  \T   {{\mathbb T}}

\def  \haf  {{\frac{1}{2}}}
\def  \p   {\partial}

\def  \tr   {\operatorname{Tr}}

\def  \Dq    {\Delta_q}
\def  \Dj    {\Delta_j}

\def  \sumjtq  {\sum_{j=-1}^q}
\def  \sumjgtrq {\sum_{j > q}}

\def  \intthree   {\int_{\T^3}}

\def  \Ssq    {\Ss_q}
\def  \Ssj    {\Ss_j}

\newcommand\onenorm[1]{\lVert#1\rVert_{L^1(\T^3)}}
\newcommand\twonorm[1]{\lVert#1\rVert_{L^2(\T^3)}}
\newcommand\threenorm[1]{\lVert#1\rVert_{L^3(\T^3)}}
\newcommand\fournorm[1]{\lVert#1\rVert_{L^4(\T^3)}}

\newcommand\qnorm[1]{\Vert#1\Vert_{L^q(\T^3)}}

\newcommand\Linfnorm[1]{\Vert#1\Vert_{L^\infty(\T^3)}}

\newcommand\Honenorm[1]{\Vert#1\Vert_{H^1(\T^3)}}
\newcommand\Hsnorm[1]{\Vert#1\Vert_{H^s(\T^3)}}

\newcommand\threeNorm[1]{\left \lVert #1 \right \rVert_{L^3(\T^3)}}
\newcommand\sixNorm[1]{\left \lVert #1 \right \rVert_{L^6(\T^3)}}

\newcommand\LinfNorm[1]{\left \lVert#1\right \rVert_{L^\infty(\T^3)}}

\newcommand\threetwoNorm[1]{\left \lVert #1 \right \rVert_{L^{\frac{3}{2}}(\T^3)}}
\newcommand\sixfiveNorm[1]{\left \lVert #1 \right \rVert_{L^{\frac{6}{5}}(\T^3)}}

\newcommand{\Ltwoinner}[1]{\left < #1 \right >}

\def\build#1_#2^#3{\mathrel{\mathop{\kern 0pt#1}\limits_{#2}^{#3}}}

\begin{document}

\title[Onsager equality and Prodi--Serrin uniqueness for NP fluids]{Onsager-Type Energy Equality and Prodi--Serrin Uniqueness for Electrodiffusion Systems}

\author [Ruimeng Hu]{Ruimeng Hu}
\address{Department of Mathematics,  Department of Statistics and Applied Probability, University of California, Santa Barbara, CA 93106, USA}
\email{rhu@ucsb.edu}

\author [Quyuan Lin]{Quyuan Lin}
\address{School of Mathematical and Statistical Sciences, Clemson University, Clemson, SC 29634, USA}
\email{quyuanl@clemson.edu}

\author [Qirui Peng]{Qirui Peng}
\address{Department of Mathematics,  University of California, Santa Barbara, CA 93106, USA}
\email{qpeng9@ucsb.edu} 





\begin{abstract}
We study weak solutions of electrodiffusion systems coupling the Nernst--Planck equations with fluid models. First, for the three-dimensional Nernst--Planck--Euler system, we establish an Onsager-type criterion for the validity of the coupled kinetic-electrostatic energy balance. The energy equality is shown to hold for weak solutions whose velocity satisfies critical Besov regularity and a vanishing dyadic flux condition. Furthermore, assuming the corresponding Onsager-type regularity for the ionic concentrations, we also prove parabolic regularity, preservation of non-negativity of the concentrations, and the associated charge-density energy identity. Second, for the three-dimensional Nernst--Planck--Navier--Stokes system, we prove a Prodi--Serrin-type uniqueness criterion for Leray--Hopf solutions: uniqueness in the Leray--Hopf class holds whenever the velocity field lies in the Ladyzhenskaya--Prodi--Serrin class $L^p_tL^q_x$ with $2/p+3/q=1$ and $q>3$. These results extend energy-equality and weak--strong uniqueness principles from incompressible fluid dynamics to electrodiffusion models involving convection, diffusion, and self-consistent electrostatic forcing.

\bigskip

\noindent\textbf{Keywords:} Nernst--Planck equations; Euler equations; Navier--Stokes equations; Onsager conjecture; energy equality; Prodi--Serrin condition; weak solutions; electrodiffusion.

\smallskip
\noindent\textbf{MSC 2020:} 35Q35, 35Q30, 35D30, 76D05, 76B03.
\end{abstract}

\maketitle

\section{Introduction}\label{Sec:Intro}

The Nernst--Planck equations are a continuum description of ionic electrodiffusion, going back to the foundational works of Nernst and Planck on diffusion and electromigration in electrolytes \cite{Nernst1888,Planck1890}.  For each charged species, the flux is driven by ordinary diffusion and by drift in the electrochemical potential.  Coupling this transport law to Poisson's equation for the self-consistent electric field gives the Poisson--Nernst--Planck system, a basic model in electrochemistry, semiconductor theory, membrane biophysics, ion channels, and micro- and nano-fluidics \cite{Rubinstein1990,NewmanAlyea2004,ChoiLui1995,Jasielec2021}.  When the electrolyte is immersed in a moving incompressible fluid, the fluid advects the ions and the charge density in turn exerts an electric body force on the fluid.  This feedback produces Nernst--Planck fluid models, in which transport, diffusion, electrostatic screening, and hydrodynamic motion are coupled at the PDE level.  The electrodiffusion model in an incompressible fluid for $N$ ionic species may be written as
\begin{equation*}
\begin{aligned}
    \partial_t c_i + u \cdot \nabla c_i
    &= \nabla \cdot \left(D_i(\nabla c_i+z_i c_i\nabla\Phi)\right),
    && i=1,\ldots,N, \\
    -\epsilon\Delta\Phi &= \rho=\sum_{i=1}^N z_i c_i, \\
    \partial_t u + u\cdot\nabla u - \nu\Delta u + \nabla p
    &= -(k_B T_k)\rho\nabla\Phi, \\
    \nabla\cdot u &=0,
\end{aligned}
\end{equation*}
where $c_i$ is the concentration of the $i$th ionic species, $z_i$ is its valence, $D_i$ is its diffusivity, $k_B$ is the Boltzmann constant, $T_k$ is the constant temperature, $\Phi$ is the electric potential, $\rho$ is the charge density, $u$ is the velocity field, $p$ is the pressure, $\epsilon$ is the dielectric constant, and $\nu\ge 0$ is the fluid viscosity.  The inviscid case $\nu=0$ corresponds to the Nernst--Planck--Euler (NPE) system, while $\nu>0$ gives the viscous Nernst--Planck--Navier--Stokes (NPNS) system.

The electrodiffusion systems have been studied intensively in the mathematical literature.
For the NPE system, the global existence of weak solutions was proved in \cite{IgnatovaShu2021}, and the global well-posedness and Gevrey regularity have been established in \cite{AbdoLeeWang2023}. For the viscous case, the existence of global weak solutions to the NPNS system has been established in \cite{FischerSaal2017,Schmuck2009}.
  In \cite{ConstantinIgnatova2019}, the authors considered the 2D bounded domain case with various physical boundary conditions for $c_i$ and demonstrated the existence and uniqueness of global strong solutions and their convergence to Boltzmann steady states. For strong solutions in 3D, \cite{ConstantinIgnatovaLee2021Far} studied the case of two ionic species and many ionic species having equal diffusivities. In the context of Boltzmann states' stability, \cite{ConstantinIgnatovaLee2022Near} demonstrated nonlinear stability in both 2D and 3D bounded domains with certain boundary conditions. Some other related electrodiffusion models include Nernst--Planck--Boussinesq system \cite{AbdoHuLin2024NPB}, in which the non-isothermal setting was considered, and Nernst--Planck--Darcy system \cite{ignatova2022global,AbdoLeeWang2023}, where the fluid obeys Darcy's law. 

For the Onsager part of this paper, we focus on the NPE system with two ionic species, opposite valences, and equal ionic diffusivity.  Taking $N=2$, $z_1=1$, $z_2=-1$, $D_1=D_2=D$, and introducing the charge and total concentration variables $\rho=c_1-c_2$ and $\s=c_1+c_2$, the system becomes
\begin{subequations} \label{eq:NP_Euler}
\begin{align}
    \p_t \rho + u \cdot \nabla \rho &= D \left( \Delta \rho + \nabla \s \cdot \nabla \Phi +\s \Delta \Phi \right), \label{eq:NP_Euler_1}  \\  
    \p_t \s + u \cdot \nabla \s &= D \left( \Delta \s + \nabla \rho \cdot \nabla \Phi +\rho \Delta \Phi \right), \label{eq:NP_Euler_2} \\
    -\e \Delta \Phi &= \rho, \label{eq:NP_Euler_3}\\
    \p_t u + (u \cdot \nabla )u + \nabla p &= -(k_B T_k) \rho \nabla \Phi, \label{eq:NP_Euler_4} \\
    \nabla \cdot u &= 0.  \label{eq:NP_Euler_5}
\end{align}
\end{subequations}
System \eqref{eq:NP_Euler} is equipped with initial data $(\rho,\s, u)(\cdot,0) = (\rho_0,\s_0,u_0)$ and periodic boundary conditions.
For simplicity of notation, throughout the remainder of the paper we set
\[
k_B T_k = D = \epsilon = 1.
\]

The central quantity in the NPE system is the sum of the kinetic energy of the fluid and the electrostatic energy of the ionic field.  For any smooth solution $(u,\rho,\s)$ of \eqref{eq:NP_Euler}, it holds that
\begin{align}
& \twonorm{u(t)}^2 - \twonorm{u_0}^2  +   \twonorm{\nabla \Phi(t)}^2 - \twonorm{\nabla \Phi_0}^2  \notag \\
= \, & -2 \int_0^t \int_{\T^3} |\Delta \Phi|^2 dx d\tau - 2 \int_0^t \int_{\T^3} \s |\nabla \Phi|^2 dx d\tau, \label{eq:Energy_equality}
\end{align}
for $t\in[0,T]$. The motivation for the present work comes from Onsager's theory of ideal turbulence.  Onsager predicted that weak solutions of the incompressible Euler equations conserve kinetic energy if their spatial H\"older regularity is above $1/3$, while below this threshold they may dissipate energy anomalously even in the absence of viscosity \cite{Onsager1949}. The positive direction was proved in \cite{Eyink1994,CET94}, and later sharpened in Besov spaces by \cite{CCFS08}; the dissipative direction was completed through convex integration by \cite{Isett2018} and further refined by \cite{BDLSV2019}. This viewpoint is closely tied to anomalous dissipation in vanishing-viscosity limits and has analogues for advective-diffusive scalar equations, where one studies possible anomalous diffusion or anomalous scalar dissipation in the limit of vanishing diffusivity \cite{ColomboCrippaSorella2023}.  The NPE system is a natural electro-diffusive analogue of these questions: the Euler part carries the Onsager energy flux, while the Nernst--Planck part contains true diffusion and a nonlinear electrostatic drift.  Establishing energy equality for weak solutions to the NPE system therefore requires combining the commutator mechanism of the Onsager theory with the parabolic and elliptic cancellations specific to the Nernst--Planck coupling. 

The identity \eqref{eq:Energy_equality} records the exact exchange between kinetic energy and electrostatic energy, while the Nernst--Planck diffusion produces the two dissipative terms on the right-hand side.  For weak solutions, however, the above calculation is not automatic: the convective flux, the electrostatic forcing, and the self-consistent drift may all create defect measures unless additional regularity or cancellation is available.  One of the main goals of this paper is to identify natural conditions under which no such hidden energy defect occurs.  

The discussion of the viscous part in this paper is motivated by the classical Ladyzhenskaya--Prodi--Serrin theory for the three-dimensional Navier--Stokes equations.  Leray--Hopf weak solutions exist globally in time but uniqueness and regularity remain open in general.  The Serrin criterion asserts, roughly speaking, that if a weak solution belongs to a scale-invariant class $L^p(0,T;L^q)$ with $2/p+3/q=1$, then the solution is regular and unique in the appropriate weak class \cite{Prodi1959,Serrin1962,Ladyzhenskaya1967}.  This condition is one of the central weak--strong uniqueness mechanisms for Navier--Stokes, and it is naturally formulated at the scaling level of the nonlinear transport term; see, for example, the modern discussions in \cite{BV22,CL22}.  For the NPNS system, the momentum equation is driven by the electrostatic force $\rho\nabla\Phi$, and the ionic concentrations solve drift--diffusion equations driven by both $u$ and $\nabla\Phi$.  A Serrin-type theory in this setting must therefore control not only the usual Navier--Stokes relative energy terms, but also the cross kinetic--electrostatic terms and the Nernst--Planck dissipation.  This motivates our development of a Serrin criterion for NPNS Leray--Hopf solutions. 

We now summarize the main results of the paper.  First, for the NPE system, we prove an Onsager-type energy equality for weak solutions under a critical Besov regularity and vanishing dyadic flux condition on the velocity; see Theorem \ref{Thm:Energy_equality_NPEuler}.  The proof is based on a Littlewood--Paley commutator analysis in the spirit of \cite{CET94,CCFS08}, but it also uses the elliptic relation $-\epsilon\Delta\Phi=\rho$ and the precise cancellations in the Nernst--Planck drift.  Second, under corresponding Besov assumptions on the ionic concentrations, we prove the energy identity for the charge density and total ionic concentrations, and obtain non-negativity of the original ionic concentrations; see Theorem \ref{Thm:non-negativity_NPEuler}.  Finally, for the NPNS system, we establish a Serrin-type uniqueness theorem for Leray--Hopf type solutions; see Theorem \ref{Thm:Serrin_NPNL}.  The argument combines a relative energy identity with a justification of the fundamental theorem of calculus for the cross kinetic and electrostatic energies under the Serrin condition.

The rest of the paper is organized as follows.  Section \ref{sec:Preliminaries} recalls the Littlewood--Paley theory and Besov spaces used throughout the paper.  Section \ref{sec:NP_Euler_Energy_equality_condition} studies weak solutions of the NPE system and proves the Onsager-type energy equalities and non-negativity results.  Section \ref{sec:NPNL} treats the NPNS system and proves the Serrin-type uniqueness criterion.

\section{Preliminaries}\label{sec:Preliminaries}
We collect some technical tools that are used throughout the paper in the study of energy equalities for fluid equations.
\subsection{Littlewood--Paley decomposition}\label{LP_Intro}
We recall the Littlewood--Paley theory on $\T^d$. The reader is referred to the standard textbooks by Bahouri, Chemin, and Danchin~\cite{bahouri2011fourier} and by Grafakos~\cite{grafakos2008classical} for more detailed discussions of this topic. We start by fixing a non-negative radial function $\chi \in C_0^\infty(\R^d)$ such that
\begin{equation} \label{def:chi}
\chi(\xi) :=
\begin{cases}
1, & \text{for } |\xi| \le \tfrac{3}{4},\\
0, & \text{for } |\xi| \ge 1.
\end{cases}
\end{equation}
Next we define
\begin{equation}\label{def:varphi}
\varphi(\xi) := \chi(\xi/2) - \chi(\xi),
\end{equation}
and let
\begin{equation*}
\varphi_q(\xi) :=
\begin{cases}
\varphi(\l_q^{-1}\xi), & q \ge 0,\\
\chi(\xi), & q = -1,
\end{cases}
\end{equation*}
where $\l_q = 2^q$. In Fourier space, the family $\{\varphi_q\}_{q \ge -1}$ gives a dyadic partition of unity. For a tempered distribution $u$ on $\T^3$, its $q^{\mathrm{th}}$ Littlewood--Paley projection is defined by
\[
  \Delta_q u(x) := \sum_{k\in\Z^3} \hat{u}(k)\,\varphi_q(k)\,e^{ 2\pi ik \cdot x},
\]
where $\hat{u}(k)$ denotes the $k^{\mathrm{th}}$ Fourier coefficient of $u$. Then
\[
u = \sum_{q=-1}^\infty \Delta_q u,
\]
in the sense of distributions. For any $q \in \N$, we define the low-frequency cut-off operator
\[
\Ss_q u := \sum_{j=-1}^q \Delta_j u.
\]
For later use, set
\begin{equation}
h = \Ff^{-1} \varphi \ \ \text{and} \ \ \widetilde{h} = \Ff^{-1} \chi, \label{def:h_and_htilde}
\end{equation}
where $\Ff u(k):=\hat{u}(k)$. Hence
\begin{align}
    \Dq u &= \Ff^{-1} \left( \varphi_q \Ff u \right) = \l^d_q \int_{\T^d} h(\l_q y) u(x-y) dy, \ \ q \geq 0, \label{eq:Dq_u} \\
    \Delta_{-1} u &= \Ff^{-1} \left ( \chi \Ff u \right) = \int_{\T^d} \widetilde{h}(y) u (x-y) dy.  \label{eq:Dmone_u}
\end{align}

\noindent By \eqref{def:varphi} and \eqref{def:h_and_htilde},
\begin{equation}\label{eq:q_mode_cutoff}
\Ssq u = \Ff^{-1} \left( \chi(\l^{-1}_{q+1} \cdot ) (\Ff u) (\cdot) \right) = \l^d_{q+1}\int_{\T^d} \widetilde{h}(\l_{q+1} y) u(x-y) dy .
\end{equation}

We also recall Bernstein's inequality for each block of the Littlewood--Paley decomposition.
\begin{Lemma}[Bernstein's inequality]\label{lemma:Bernstein}
Let $d$ be the spatial dimension, $r\geq s\geq 1$, and $k\geq 0$. Then, for every zero-mean tempered distribution $u$,
\bg\label{Bern1}
\lambda_q^{k}\|\Dq u\|_{L^r (\T^d)} \lesssim \|\nabla^k \Dq u\|_{L^r (\T^d)}\lesssim \lambda_q^{k+d(\frac{1}{s}-\frac{1}{r})}\|\Dq u\|_{L^s(\T^d)}.
\ed
For the low-frequency block $q=-1$, the left inequality is understood on the zero-mean subspace; the Poincar\'e inequality on this finite block gives the same bound with a constant independent of $q$.
\end{Lemma}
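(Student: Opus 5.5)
The plan is to prove both inequalities in \eqref{Bern1} by representing $\Dq u$ through a smooth frequency cut-off adapted to the dyadic annulus and then applying Young's convolution inequality on $\T^d$. Fix $q\ge 0$ and choose a radial $\psi\in C_0^\infty(\R^d)$ with $\psi\equiv 1$ on the support of $\varphi$ (an annulus $\{3/4\le|\xi|\le 2\}$) and supported in a slightly larger annulus. Then $\Dq u = \psi(\l_q^{-1}D)\Dq u$. Writing $g=\Ff^{-1}_{\R^d}\psi$ and periodizing, we get $\Dq u = g_q * \Dq u$, where $g_q(x)=\l_q^d\sum_{m\in\Z^d} g(\l_q(x+m))$. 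By Poisson summation this periodized kernel has Fourier coefficients $\psi(\l_q^{-1}k)$, and $\|g_q\|_{L^p(\T^d)}\lesssim \l_q^{d(1-1/p)}$ uniformly in $q$ because $g$ is Schwartz. The same applies to $\nabla^k g_q$, with an extra factor $\l_q^k$.

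\emph{Right inequality.} Write $\nabla^k\Dq u=(\nabla^k g_q)*\Dq u$ and apply Young's inequality with $1+1/r=1/p+1/s$. This gives
\[
\|\nabla^k\Dq u\|_{L^r}\lesssim \l_q^{k}\,\l_q^{d(1-1/p)}\|\Dq u\|_{L^s}=\l_q^{k+d(1/s-1/r)}\|\Dq u\|_{L^s}.
\]
For $q=-1$ the same argument works with $\chi$ in place of $\psi$, and all factors of $\l_q$ are then $O(1)$.

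\emph{Left inequality.} For $q\ge0$, use a Fourier multiplier that inverts $\nabla^k$ on the annulus. For even $k$, set $m(\xi)=\psi(\xi)|\xi|^{-k}$, which is smooth and compactly supported because $\psi$ vanishes near $0$. Then $\Dq u=\l_q^{-k}\,m(\l_q^{-1}D)(-\Delta)^{k/2}\Dq u$, and $(-\Delta)^{k/2}$ is a linear combination of components of $\nabla^k$. For odd $k$, use $m_j(\xi)=\psi(\xi)\,i\xi_j|\xi|^{-k-1}$ and the identity $\sum_j m_j(\xi)\cdot(i\xi_j)|\xi|^{k-1}\cdots$. More simply, write $\Dq u=\l_q^{-k}\sum_{|\alpha|=k} m_\alpha(\l_q^{-1}D)\partial^\alpha\Dq u$ with $m_\alpha(\xi)=\psi(\xi)c_\alpha(-i\xi)^\alpha|\xi|^{-2k}$, using $|\xi|^{2k}=\sum_{|\alpha|=k}c_\alpha\xi^{2\alpha}$. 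Each kernel $\Ff^{-1}m_\alpha$ is Schwartz, so its $\l_q$-rescaled periodization has $L^1$ norm bounded uniformly in $q$. Young's inequality with $p=1$ then gives $\l_q^k\|\Dq u\|_{L^r}\lesssim\|\nabla^k\Dq u\|_{L^r}$.

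For $q=-1$ the block has finitely many nonzero modes, all with $|k|\le 1$, and the zero mode vanishes by the mean-zero assumption. So $\|\Delta_{-1}u\|_{L^r}\lesssim\|\nabla^k\Delta_{-1}u\|_{L^r}$ follows from equivalence of norms on this finite-dimensional space, or from iterated Poincaré. The constant does not depend on $q$ since only one block is involved.

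The main obstacle I expect is the uniform-in-$q$ kernel bound on the torus, namely showing that the periodized rescaled kernels satisfy $\|g_q\|_{L^p(\T^d)}\lesssim\l_q^{d(1-1/p)}$. I would handle this by bounding the $L^p(\T^d)$ norm of the periodization by the $L^p(\R^d)$ norm of $\l_q^d g(\l_q\cdot)$ plus a rapidly decaying tail, using the Schwartz decay of $g$ together with $\l_q\ge 1/2$.
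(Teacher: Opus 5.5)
Your proof is correct. It is the standard argument of \cite[Chapter~2]{bahouri2011fourier}, which the paper only cites: a reproducing multiplier on the annulus plus Young's inequality, transplanted to $\T^d$ by periodizing rescaled Schwartz kernels. Your treatment of the $q=-1$ block by finite-dimensional (Poincar\'e) reasoning matches the paper's remark.

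One small simplification: since $\chi(\xi)=0$ for $|\xi|\ge1$, the only lattice point where $\chi$ is nonzero is $k=0$. So $\Delta_{-1}u=\hat u(0)$ vanishes for mean-zero $u$, and the $q=-1$ case of both inequalities is trivial. This is also why taking $\chi$ itself as the reproducing multiplier there is harmless; in general one needs a cutoff equal to $1$ on the support of $\chi$.
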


\subsection{Sobolev and Besov spaces}
For functions $f,g \in L^2(\T^3)$, the $L^2$-inner product is defined by
\[
\left<f,g \right> = \int_{\T^3} f(x) g(x) dx.
\]
For $f \in L^2(\T^3)$, we write its Fourier series as
\[
f(x) = \sum_{k \in \Z^3} \widehat{f}_k e^{2\pi i k \cdot x}, \ \ \ \widehat{f}_k := \int_{\T^3} f(x) e^{-2\pi i k \cdot x} dx,
\]
and call $\widehat{f}_k$ the $k^{\mathrm{th}}$ Fourier coefficient.
For $s \in \R$, we write $H^s(\T^3)$ for the Sobolev space on $\T^3$ endowed with the norm
\[
\Hsnorm{f} := \biggl( \sum_{k \in \Z^3} \left(1 + |k|^2 \right)^s |\widehat{f}_k|^2 \biggr)^\haf.
\]
If $f$ has zero mean, then its $H^s$-norm is equivalent to the homogeneous seminorm $\dot{H}^s$ defined by
\[
\|f \|_{\dot{H}^s(\T^3)} := \biggl( \sum_{k \in \Z^3\setminus\{0\}} |k|^{2s} |\widehat{f}_k|^2 \biggr)^\haf.
\]
We also recall the dyadic characterization of the $H^s$-norm:
\[
\|f \|_{\dot{H}^s(\T^3)} \sim \left( \sum_{q=-1}^\infty \l^{2s}_q \twonorm{\Dq f}^2 \right)^\haf.
\]
For $s \in \R$ and $1 \le p, r \le \infty$, the Besov space $\Bb^s_{p,r}$ consists of all tempered distributions $u$ such that
\begin{equation}\label{def_Besov1}
\|u \|_{\Bb^s_{p,r}} :=
\left( \sum_{q=-1}^\infty \lambda_q^{rs} \| \Delta_q u \|_{L^p(\T^3)}^r \right)^{1/r}
< \infty,
\end{equation}
for $r < \infty$. In the case $r = \infty$, we define
\begin{equation}\label{def_Besov2}
\|u \|_{\Bb^s_{p,\infty}} :=
\sup_{q \ge -1} \lambda_q^{s} \| \Delta_q u \|_{L^p(\T^3)} < \infty.
\end{equation}
Bernstein's inequality implies the following embeddings:

\begin{Corollary}[Besov embeddings; see {\cite[Chapter~2]{bahouri2011fourier}}]\label{Corollary:Besov_embedding}
Let $q \geq p \geq 1$, $a>2$, $r\geq1$, and $s\geq0$. Then the following continuous embeddings hold:
\begin{align}
\Bb^s_{p,r} &\subset \Bb^{s - d\left(\frac{1}{p} - \frac{1}{q} \right)}_{q,r}, \label{Corollary:Besov_embedding_1} \\
\Bb^0_{a,2} &\subset L^{a}, \ \ \Bb^s_{2,2} = H^s. \label{Corollary:Besov_embedding_2}
\end{align}
\end{Corollary}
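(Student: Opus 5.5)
The plan is to derive both embeddings directly from the dyadic definitions \eqref{def_Besov1}--\eqref{def_Besov2} together with Bernstein's inequality (Lemma \ref{lemma:Bernstein}) and standard Littlewood--Paley facts.

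\emph{Embedding \eqref{Corollary:Besov_embedding_1}.} Fix $q\ge p\ge 1$ and apply the right-hand inequality of \eqref{Bern1} with $k=0$, $r=q$, $s=p$ to each block:
\[
\|\Dq u\|_{L^q}\lesssim \l_q^{d(\frac1p-\frac1q)}\|\Dq u\|_{L^p}.
\]
For the block $\Delta_{-1}u$, whose Fourier support is finite, the same bound holds with a fixed constant. Multiplying by $\l_q^{s-d(\frac1p-\frac1q)}$ gives
\[
\l_q^{s-d(\frac1p-\frac1q)}\|\Dq u\|_{L^q}\lesssim \l_q^{s}\|\Dq u\|_{L^p}
\]
uniformly in $q$. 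Taking the $\ell^r$ norm in $q$, or the supremum when $r=\infty$, yields the embedding with a constant independent of $u$.

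\emph{The identity $\Bb^s_{2,2}=H^s$.} By Plancherel, $\|\Dq u\|_{L^2}^2=\sum_k\varphi_q(k)^2|\hat u(k)|^2$. On $\operatorname{supp}\varphi_q$ we have $1+|k|^2\sim\l_q^2$, and at most two $\varphi_q$ overlap at any $k$. Since $\sum_q\varphi_q=1$, this gives $1/2\le\sum_q\varphi_q^2\le 1$. Therefore
\[
\sum_q\l_q^{2s}\|\Dq u\|_{L^2}^2\sim\sum_k(1+|k|^2)^s|\hat u(k)|^2,
\]
which is the identity.

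\emph{The embedding $\Bb^0_{a,2}\subset L^a$ for $a>2$.} This is the step I expect to be the main obstacle. Here Bernstein's inequality alone is not enough, because the triangle inequality would require $\ell^1$ summability rather than $\ell^2$. Instead I would invoke the Littlewood--Paley square function theorem on $\T^3$ (see \cite{grafakos2008classical}), valid for $1<a<\infty$:
\[
\|u\|_{L^a}\sim\Bigl\|\bigl(\textstyle\sum_q|\Dq u|^2\bigr)^{1/2}\Bigr\|_{L^a}.
\]
Since $a/2\ge1$, Minkowski's inequality in $L^{a/2}$ gives
\[
\Bigl\|\textstyle\sum_q|\Dq u|^2\Bigr\|_{L^{a/2}}\le\sum_q\|\Dq u\|_{L^a}^2.
\]
Taking square roots yields $\|u\|_{L^a}\lesssim\|u\|_{\Bb^0_{a,2}}$. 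Only the lower bound of the square function theorem is needed, and that is where the hypothesis $a>2$ (indeed $a\ge2$) enters. One can either cite this directly from \cite[Chapter~2]{bahouri2011fourier} or transfer the $\R^d$ result to the torus via periodic multipliers.
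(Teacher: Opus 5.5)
Your proposal is correct. For the first embedding it is exactly what the paper intends: the paper gives no proof beyond the sentence that Bernstein's inequality implies the corollary, plus the citation to Bahouri--Chemin--Danchin.

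For the second line you genuinely go beyond that justification, and rightly so. The identity $\Bb^s_{2,2}=H^s$ comes from Plancherel and the bounded overlap of the $\varphi_q$, not from Bernstein. As you observe, Bernstein combined with the triangle inequality only yields $\Bb^0_{a,1}\subset L^a$. Reaching the $\ell^2$ endpoint therefore needs orthogonality information, which you import through the Littlewood--Paley square function theorem on $\T^3$. In the direction you use, that theorem follows by duality from the square function bound for a slightly fattened family $\widetilde{\Delta}_q$ with $\widetilde{\Delta}_q\Dq=\Dq$. The periodic version follows from the Euclidean one by transference.

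The cost is a Calder\'on--Zygmund-type tool that the paper never states. The gain is an actual proof of the claim, which the paper's one-line remark does not supply.

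One small correction to your final sentence. The square function equivalence holds for every $1<a<\infty$, so it is not where $a\ge 2$ enters. The hypothesis is used only in the Minkowski step $\bigl\|\sum_q|\Dq u|^2\bigr\|_{L^{a/2}}\le\sum_q\|\Dq u\|_{L^a}^2$, which you correctly flagged one line earlier.
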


\section{Energy equality for the Nernst--Planck--Euler system}\label{sec:NP_Euler_Energy_equality_condition}
For smooth solutions $(u,\rho,\s)$ of \eqref{eq:NP_Euler}, one has the energy equality \eqref{eq:Energy_equality}. Indeed, testing \eqref{eq:NP_Euler_4} by $2 u$ and using \eqref{eq:NP_Euler_5} yields
\begin{align}
\frac{d}{dt} \twonorm{u(t)}^2 = -2 \int_{\T^3}  (\rho u) \cdot \nabla \Phi dx . \label{eq:Energy_equality_1}
\end{align}
On the other hand, testing \eqref{eq:NP_Euler_1} by $\Phi$ and using \eqref{eq:NP_Euler_3} gives
\begin{align*}
\frac{d}{dt} \int_{\T^3} \rho \Phi dx &= \int_{\T^3} \p_t \rho \Phi dx + \int_{\T^3} \rho \p_t \Phi dx
= \int_{\T^3} \p_t \rho \Phi dx + \int_{\T^3} (- \Delta \Phi) \p_t \Phi dx \\
&= \int_{\T^3} \p_t \rho \Phi dx + \int_{\T^3} \Phi \p_t \left( - \Delta \Phi \right)  dx
= 2 \int_{\T^3} \p_t \rho \Phi dx.
\end{align*}
Consequently,
\begin{align}
\frac{d}{dt} \int_{\T^3} \rho \Phi dx
&= -2 \int_{\T^3} \left( u \cdot \nabla \rho \right)\Phi dx + 2 \int_{\T^3} \Phi \left( \Delta \rho +\nabla \s \cdot \nabla \Phi + \s \Delta \Phi \right) dx \notag \\
&= 2 \int_{\T^3} \left( \rho u \right) \cdot \nabla \Phi dx  - 2   \int_{\T^3} |\Delta \Phi|^2 dx - 2 \int_{\T^3} \s |\nabla \Phi|^2 dx. \label{eq:Energy_equality_2}
\end{align}
Since
\begin{equation}\label{eq:Energy_equality_3}
\int_{\T^3} \rho \Phi dx = - \int_{\T^3} \Phi \Delta \Phi  dx =  \int_{\T^3} |\nabla \Phi |^2 dx,
\end{equation}
combining \eqref{eq:Energy_equality_1}--\eqref{eq:Energy_equality_3} and integrating in time yields \eqref{eq:Energy_equality}. We are interested in criteria that ensure the energy equality for weak solutions of \eqref{eq:NP_Euler}.
\begin{Definition}\label{def:weak_solution_NP_Euler}
We call $(u,\rho, \s)$ a weak solution to the system \eqref{eq:NP_Euler} on $[0,T]$ with weakly divergence-free initial data $u_0 \in L^2 (\T^3)$ and $\rho_0, \s_0 \in L^2 (\T^3)$, satisfying the compatibility condition $\intthree \rho_0\,dx=0$, if
\begin{align*}
u \in C_w ([0,T];L^2(\T^3)), \ \ \rho,\s \in C_w ([0,T];L^2(\T^3))
\end{align*}
and satisfy the following weak formulations:  
\begin{align}
&\Ltwoinner{u(t),\phi(t)} - \Ltwoinner{u_0,\phi(0)} - \int_0^t \Ltwoinner{u(s),\p_s \phi(s)} ds = \int_0^t \Ltwoinner{ u \cdot \nabla \phi , u }  ds -  \int_0^t \Ltwoinner{  \rho \nabla \Phi , \phi } ds, \label {def:weak_solution_NP_Euler_1} \\
&\Ltwoinner{\rho(t),\psi_1(t)} - \Ltwoinner{\rho_0,\psi_1(0)} - \int_0^t \Ltwoinner{\rho(s),\p_s \psi_1(s) + u \cdot \nabla \psi_1 + \Delta \psi_1} ds  = -  \int_0^t \Ltwoinner{\s \nabla \Phi, \nabla \psi_1}ds, \label {def:weak_solution_NP_Euler_2} \\
&\Ltwoinner{\s(t),\psi_2(t)} - \Ltwoinner{\s_0,\psi_2(0)} - \int_0^t \Ltwoinner{\s(s),\p_s \psi_2(s) + u \cdot \nabla \psi_2 + \Delta \psi_2} ds  = -  \int_0^t \Ltwoinner{\rho \nabla \Phi, \nabla \psi_2}ds, \label {def:weak_solution_NP_Euler_3}
\end{align}
\begin{equation}\label{def:weak_solution_NP_Euler_4}
 \Ltwoinner{\rho +  \Delta \Phi, \psi_3 } = 0.
\end{equation}
The potential is normalized by $\intthree \Phi(t,x)\,dx=0$, and $\rho$ satisfies $\intthree \rho(t,x)\,dx=0$.  These identities hold for every $t \in [0,T]$ and every $\phi, \psi_1, \psi_2, \psi_3 \in C^\infty ([0,T] \times \T^3)$ such that $\phi$ is divergence-free. In addition, $\nabla\cdot u=0$ in $\mathcal D'((0,T)\times\T^3)$.
\end{Definition}

\begin{Remark}
For the purpose of proving the energy equality, the condition
$\rho \in L^2(\mathbb T^3)$ is sufficient, by
\eqref{def:weak_solution_NP_Euler_4}. Moreover, since the weak formulation
above extends to test functions that are smooth in space and weakly
Lipschitz continuous in time, one may use spatially mollified weak
solutions as admissible test functions in
\eqref{def:weak_solution_NP_Euler_1}--\eqref{def:weak_solution_NP_Euler_4}, after first applying a standard time regularization and then passing to the limit.
\end{Remark}

For physical reasons, it is also important that weak solutions preserve non-negativity of the ion concentrations throughout the evolution. We therefore introduce the following definition of a parabolic weak solution. The terminology originates from the study of weak solutions to advection--diffusion equations; see \cite{BCC24}.

\begin{Definition}\label{def:parabolic_solution}
A weak solution of the system \eqref{eq:NP_Euler} on $[0,T] \times \T^3$ is called \textit{parabolic} if
\[
\rho,\s \in L^2([0,T];H^1(\T^3)).
\]
\end{Definition}

For a smooth velocity field $u$, the following lemma shows that parabolic solutions preserve non-negativity provided that the initial ion concentrations are non-negative. To this end, we introduce the following quantities:
\begin{equation}\label{def:ion_concentration}
\Theta(t) := \haf \left( \s(t) + \rho(t) \right), \ \ \ \theta(t) := \haf \left( \s(t) - \rho(t) \right),
\end{equation}
which represent the concentrations of the two ionic species, respectively. Combining the weak formulations \eqref{def:weak_solution_NP_Euler_2} and \eqref{def:weak_solution_NP_Euler_3} yields the following identities in the sense of distributions:
\begin{align}
\p_t \th + u \cdot \grad \theta &= \Delta \theta - \divv \left( \theta \nabla \Phi \right) \label{lemma:non-negativity_smooth_NPEuler_1} \\
\p_t \Theta + u \cdot \grad \Theta &= \Delta \Theta + \divv \left( \Theta \nabla \Phi \right) \label{lemma:non-negativity_smooth_NPEuler_2}
\end{align}

\begin{Lemma}\label{lemma:non-negativity_smooth_NPEuler}
Let $(u,\rho,\s)$ be a parabolic solution with $u$ smooth. Then $\Theta(t), \theta(t) \ge 0$ for all $t \in [0,T]$ whenever $\Theta_0, \theta_0 \ge 0$.
\end{Lemma}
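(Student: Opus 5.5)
We test the equation \eqref{lemma:non-negativity_smooth_NPEuler_1} for $\theta$ against its negative part $\theta_- := \max(-\theta,0)$; the equation \eqref{lemma:non-negativity_smooth_NPEuler_2} for $\Theta$ is handled identically, since only the sign in front of the drift changes. The regularity available is $\theta\in L^2_tH^1_x\cap C_w L^2_x$, and by \eqref{def:weak_solution_NP_Euler_4} we have $\nabla\Phi\in L^\infty_t H^1_x\subset L^\infty_tL^6_x$. Formally, multiplying by $-\theta_-$ and integrating gives
\[
\frac12\frac{d}{dt}\twonorm{\theta_-}^2 + \twonorm{\nabla\theta_-}^2 = -\int_{\T^3}\theta_-\nabla\Phi\cdot\nabla\theta_-\,dx .
\]
The transport term drops out because $u$ is smooth and divergence free. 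Writing the drift term as $-\frac12\int\nabla\Phi\cdot\nabla(\theta_-^2)=\frac12\int\Delta\Phi\,\theta_-^2=-\frac12\int\rho\,\theta_-^2$ is not helpful with $\rho$ only in $L^2$. So we instead estimate it directly:
\[
\Bigl|\int\theta_-\nabla\Phi\cdot\nabla\theta_-\Bigr|\le \|\nabla\Phi\|_{L^6}\|\theta_-\|_{L^3}\|\nabla\theta_-\|_{L^2}.
\]
Then Gagliardo--Nirenberg, $\|\theta_-\|_{L^3}\lesssim\|\theta_-\|_{L^2}^{1/2}\|\theta_-\|_{H^1}^{1/2}$, and Young's inequality give
\[
\frac{d}{dt}\twonorm{\theta_-}^2 \le C\bigl(1+\|\nabla\Phi\|_{L^6}^4\bigr)\twonorm{\theta_-}^2,
\]
after absorbing the gradient terms. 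Since $\theta_-(0)=0$, Gr\"onwall's inequality yields $\theta_-\equiv0$.

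The main step, and the expected obstacle, is justifying this computation for a weak solution, that is, establishing the chain rule for $\twonorm{\theta_-}^2$. The plan is as follows.

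\emph{Time regularity.} From the equation and the bounds on $\theta$, the time derivative $\p_t\theta$ lies in $L^2_tH^{-1}_x+L^{4/3}_t W^{-1,3/2}_x$-type spaces. Indeed, $u\theta\in L^2_tL^2_x$ because $u$ is smooth, and $\theta\nabla\Phi\in L^2_tL^{3/2}_x$, while in fact $\theta\nabla\Phi\in L^2_t L^{2}_x$ since $\theta\in L^2_tL^6_x$ and $\nabla\Phi\in L^\infty_tL^3_x$. Thus $\p_t\theta\in L^2_tH^{-1}$. Together with $\theta\in L^2_tH^1$, the Lions--Magenes lemma applies. Its variant for Lipschitz functions, $G(\theta)=\frac12\theta_-^2$ with $G'(\theta)=-\theta_-\in L^2_tH^1$, gives $\theta\in C([0,T];L^2)$ and
\[
\frac{d}{dt}\int G(\theta)=\langle\p_t\theta,-\theta_-\rangle .
\]

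\emph{Approximation.} To prove this rigorously, we mollify in space, $\theta^\delta=\theta*\eta_\delta$, using the Remark after Definition~\ref{def:weak_solution_NP_Euler}. We apply the identity to a smooth convex approximation $G_k$ of $G$, and then pass $\delta\to0$ and $k\to\infty$ using strong convergence in $L^2_tH^1$ and the duality $L^2H^{-1}$--$L^2H^1$. We also use $\nabla\theta_-=-\mathbbm{1}_{\{\theta<0\}}\nabla\theta$, which holds for $H^1$ functions. The remaining terms then converge by the integrability bounds listed above. This yields the differential inequality and hence the conclusion.
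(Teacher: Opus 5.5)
Your argument is correct and is essentially the paper's proof. You show $\partial_t\theta\in L^2(0,T;H^{-1})$, test the $\theta$-equation with $-\theta^-$ (the paper also justifies this only ``by a density argument''), remove the transport term by incompressibility, interpolate, absorb with Young, and close with Gr\"onwall. The differences are cosmetic: you bound the drift directly by $\|\nabla\Phi\|_{L^6}\|\theta^-\|_{L^3}\|\nabla\theta^-\|_{L^2}$, while the paper integrates by parts to $\frac12\langle\rho,|\theta^-|^2\rangle$ and uses $\|\rho\|_{L^2}\|\theta^-\|_{L^4}^2$ with $\|\theta^-\|_{L^4}^2\lesssim\|\theta^-\|_{L^2}^{1/2}\|\theta^-\|_{H^1}^{3/2}$, so that form is in fact usable with $\rho\in L^2$, contrary to your remark. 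Also, the drift term in your identity should carry a plus sign, which is harmless since you only estimate its absolute value.
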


\begin{proof}
We will show that $\theta$ remains non-negative provided that $\theta_0 \ge 0$. The argument for $\Theta$ follows analogously. Denote
\[
\th^- (t) := \max \{-\theta(t),0 \}.
\]
Since $\rho,\s \in L^2([0,T];H^1(\T^3))$, it follows that $\theta$ also belongs to $L^2([0,T];H^1(\T^3))$. We claim that $\p_t \th \in L^2(0,T;H^{-1})$. Indeed, for $\phi \in H^1(\T^3)$,
\begin{align*}
\Ltwoinner{\p_t \th, \phi} &= \Ltwoinner{- u \cdot \grad \th, \phi} + \Ltwoinner{\Delta\th, \phi} + \Ltwoinner{-\divv (\th \grad \Phi), \phi} \\
&\leq \threenorm{u}\twonorm{\grad \th} \sixNorm{\phi} + \twonorm{\grad \th} \Honenorm{\phi} + \threenorm{\th} \sixNorm{\grad \Phi} \Honenorm{\phi} \\
&\les \left( \threenorm{u}  + 1 + \twonorm{\rho} \right) \Honenorm{\th} \Honenorm{\phi},
\end{align*}
and therefore
\[
\|\p_t \th \|_{H^{-1}(\T^3)} \les \left( \threenorm{u}  + 1 + \twonorm{\rho} \right) \Honenorm{\th}.
\]
Therefore, $\p_t \th \in L^2(0,T;H^{-1})$ as $u$ is smooth and $\rho \in L^\infty([0,T];L^2(\T^3))$.
By a density argument, we may test equation \eqref{lemma:non-negativity_smooth_NPEuler_1} with $-\theta^{-}$ to obtain
\begin{align}
    \haf \twonorm{\th^-(t)}^2 - \haf \twonorm{\th^-_0}^2 + \int_0^t \Ltwoinner{u \cdot \grad \th, \th^-}  ds &= -\int_0^t \twonorm{\grad \th^-}^2 ds - \int_0^t  \Ltwoinner{\th \grad \Phi, \grad \th^-}  ds, \notag
\end{align}
and therefore,
\begin{align}
     \haf \twonorm{\th^-(t)}^2 - \haf \twonorm{\th^-_0}^2  &= -\int_0^t \twonorm{\grad \th^-}^2 ds + \frac12\int_0^t  \Ltwoinner{\rho, |\th^-|^2}  ds  \label{lemma:non-negativity_smooth_NPEuler_3}
\end{align}
where we used integration by parts, the identity $\th \th^- = -|\th^-|^2$, and the divergence-free condition on $u$. Using H\"older's and Ladyzhenskaya inequalities, we estimate the last term of \eqref{lemma:non-negativity_smooth_NPEuler_3} and get
\begin{align*}
&\haf \twonorm{\th^-(t)}^2 - \haf \twonorm{\th^-_0}^2  \leq -\int_0^t \twonorm{\grad \th^-}^2 ds + \frac12\|\rho \|_{L^\infty_t L^2_x} \int_0^t  \fournorm{\th^-}^2 ds \\
&\leq - \int_0^t \twonorm{\grad \th^-}^2 ds +C \|\rho \|_{L^\infty_t L^2_x} \int_0^t  \twonorm{\th^-}^\haf (\twonorm{\grad \th^-}^\frac{3}{2} + \twonorm{\th^-}^\frac{3}{2}) ds \\
& \ \leq - \int_0^t \twonorm{\grad \th^-}^2 ds + C (1+\|\rho \|_{L^\infty_t L^2_x}^4) \int_0^t  \twonorm{\th^-}^2 ds + \int_0^t \twonorm{\grad \th^-}^2 ds \\
&\ \ \leq C (1+\|\rho \|^4_{L^\infty_t L^2_x}) \int_0^t  \twonorm{\th^-}^2 ds.
\end{align*}
As $\theta_0^- = 0$, we conclude from the above inequality and Gr\"onwall's lemma that $\theta^-(t) = 0$ for all $t \in [0,T]$.
\end{proof}

We now give an Onsager-type regularity condition ensuring that weak solutions of \eqref{eq:NP_Euler} satisfy the energy equality. In addition, we establish a sufficient condition under which the ion concentrations remain non-negative under only Onsager-type regularity.
\begin{Theorem}\label{Thm:Energy_equality_NPEuler}
Let $(u,\rho,\s)$ be a weak solution to the equations \eqref{eq:NP_Euler} on $[0,T]$ such that
\begin{align*}
u \in L^3( [0,T];\Bb^{\frac{1}{3}}_{3,\infty}(\T^3))
\end{align*}
and suppose that the vanishing dyadic flux condition
\begin{equation}\label{Thm:Energy_equality_NPEuler_1}
\lim_{q \to \infty} \int_0^T \l_q \threenorm{\Dq u (t)}^3 dt  = 0
\end{equation}
holds. Then \eqref{eq:Energy_equality} holds for $(u,\rho,\s)$.
\end{Theorem}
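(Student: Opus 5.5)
The plan is to regularize in frequency with the cut-off $\Ss_q$ and derive an approximate energy balance for $(\Ss_q u,\nabla \Ss_q\Phi)$. Each term is then passed to the limit $q\to\infty$, exactly as in the smooth computation \eqref{eq:Energy_equality_1}--\eqref{eq:Energy_equality_3}. By the Remark following Definition~\ref{def:weak_solution_NP_Euler}, after a time regularization I may use $\phi=\Ss_q^2 u$ in \eqref{def:weak_solution_NP_Euler_1}; it is divergence-free since $\Ss_q$ commutes with derivatives. I may also use $\psi_1=\Ss_q^2\Phi$ in \eqref{def:weak_solution_NP_Euler_2}. Note that $\partial_t\Phi=(-\Delta)^{-1}\partial_t\rho$, so $\partial_t \Ss_q\Phi$ has the same time regularity as $\partial_t \Ss_q\rho$.

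The first test function yields
\[
\twonorm{\Ss_q u(t)}^2-\twonorm{\Ss_q u_0}^2=2\int_0^t\!\!\intthree \Ss_q(u\otimes u):\nabla \Ss_q u\,dx\,ds-2\int_0^t\Ltwoinner{\Ss_q(\rho\nabla\Phi),\Ss_q u}ds .
\]
The second, using $\Ltwoinner{\partial_t \Ss_q\rho,\Ss_q\Phi}=\tfrac12\frac{d}{dt}\twonorm{\nabla \Ss_q\Phi}^2$ (the symmetry argument of \eqref{eq:Energy_equality_2}, valid for the smooth truncations), yields
\[
\twonorm{\nabla \Ss_q\Phi(t)}^2-\twonorm{\nabla\Ss_q\Phi_0}^2=2\int_0^t\Bigl(\Ltwoinner{\Ss_q(\rho u),\nabla \Ss_q\Phi}-\twonorm{\Ss_q\Delta\Phi}^2-\Ltwoinner{\Ss_q(\s\nabla\Phi),\nabla \Ss_q\Phi}\Bigr)ds .
\]
The left-hand sides converge for every $t$, since $u(t),\nabla\Phi(t)\in L^2$ with $\|\nabla\Phi\|_{L^2}\les\|\rho\|_{L^2}$.

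Next come the lower-order terms, using only $\rho,\s\in L^\infty_tL^2_x$, $\nabla\Phi\in L^\infty_t H^1_x\subset L^\infty_tL^6_x$ and $\Delta\Phi=-\rho\in L^\infty_tL^2_x$. Since $\Bb^{1/3}_{3,\infty}\subset L^3$ (the Besov series converges geometrically), we also have $u\in L^3_tL^3_x$. Hölder with $\tfrac12+\tfrac16+\tfrac13=1$ shows $\rho u\cdot\nabla\Phi\in L^1_{t,x}$. Likewise $\s|\nabla\Phi|^2\in L^1_{t,x}$ by $\tfrac12+\tfrac16+\tfrac16<1$ on $\T^3$, and $|\Delta\Phi|^2\in L^1$. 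The operators $\Ss_q$ are uniformly bounded on $L^p$ (their kernels in \eqref{eq:q_mode_cutoff} have uniformly bounded $L^1$ norm) and converge strongly. Dominated convergence in time then gives
\[
\Ltwoinner{\Ss_q(\rho\nabla\Phi),\Ss_q u}\to\intthree \rho u\cdot\nabla\Phi\,dx,\qquad \Ltwoinner{\Ss_q(\rho u),\nabla\Ss_q\Phi}\to\intthree\rho u\cdot\nabla\Phi\,dx,
\]
and similarly for the dissipative terms. The two coupling terms therefore cancel in the limit when the two identities are added, reproducing the right-hand side of \eqref{eq:Energy_equality}.

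The main obstacle is the Euler flux $\Pi_q:=\intthree\Ss_q(u\otimes u):\nabla\Ss_q u\,dx$, and I would follow \cite{CCFS08}. Since $\intthree (\Ss_q u\otimes\Ss_q u):\nabla \Ss_q u=0$, one can write $\Pi_q=\intthree r_q(u,u):\nabla\Ss_q u$ with the commutator
\[
r_q(u,u)=\int \l_{q+1}^3\widetilde h(\l_{q+1}y)\,(u(x-y)-u(x))\otimes(u(x-y)-u(x))\,dy-(u-\Ss_q u)\otimes(u-\Ss_q u).
\]
Splitting into dyadic blocks and using Bernstein's inequality (Lemma~\ref{lemma:Bernstein}) then gives
\[
|\Pi_q|\les\sum_{p\ge -1}K(q-p)\,\l_p\threenorm{\Dq[p]u}^3 ,
\]
where I write $\Dq[p]$ loosely for $\Delta_p$. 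The kernel $K(j)$ is, for instance, $\l_{|j|}^{-2/3}$, which is summable. Integrating in time, each term $\int_0^T\l_p\threenorm{\Delta_p u}^3dt$ is bounded by $\|u\|_{L^3\Bb^{1/3}_{3,\infty}}^3$ and tends to $0$ by \eqref{Thm:Energy_equality_NPEuler_1}. A discrete dominated-convergence argument (convolution of a summable kernel with a bounded sequence tending to zero) therefore gives $\int_0^t\Pi_q\,ds\to0$, which completes the energy equality. The delicate point is to get the exponents right in the low-high and high-high interactions so that $K$ is genuinely summable. A secondary point is to justify the time-regularized test functions carefully in the $\rho$ equation, where the drift term $\s\nabla\Phi$ is only in $L^\infty_t L^{3/2}_x$; this is harmless once it is paired with the smooth $\nabla\Ss_q^2\Phi$.
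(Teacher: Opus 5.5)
Your proposal is correct. For the Euler flux it follows the paper's route: the splitting of $\Pi^u_q$ into the $r_q(u,u)$ term and the high--high term $(u-\Ssq u)\otimes(u-\Ssq u)$, followed by the CCFS dyadic bounds. One detail you skip: those bounds naturally give $(K*d^2)^{3/2}(q)$ with $d_j=\l_j^{1/3}\threenorm{\Delta_j u}$. The paper then uses Jensen's inequality (with $K\in\ell^1$) to reach the linear form $\sum_j K(q-j)\l_j\threenorm{\Delta_j u}^3$ that you state. Your kernel $\l_{|j|}^{-2/3}$ dominates the paper's, so the $\ell^1*c_0$ argument goes through unchanged.

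Where you genuinely differ is the electrostatic part. The paper packages these terms into commutator fluxes. $\Pi^1_q$ is the mismatch between $\Ltwoinner{\Ssq(\rho u),\Ssq\nabla\Phi}$ and $\Ltwoinner{\Ssq(\rho\nabla\Phi),\Ssq u}$, and $\Pi^2_q$ is the drift commutator. The paper estimates both with Constantin--E--Titi decompositions, elliptic regularity, and the bound $\|\Ssq u\|_{L^\infty}\les\l_q^{2/3}\|u\|_{\Bb^{1/3}_{3,\infty}}$, obtaining decay rates $\l_q^{-1/3}$ and $\l_q^{-1/2}$. It must also show separately that $\int\!\!\int \Ssq\s\,|\Ssq\nabla\Phi|^2\to\int\!\!\int\s|\nabla\Phi|^2$.

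You instead let each coupling and drift term converge to its natural limit. This uses only the uniform $L^p$-boundedness and strong convergence of $\Ssq$ and dominated convergence in time. The inputs are $u\in L^3_tL^3_x$, $\rho,\s\in L^\infty_tL^2_x$ and $\nabla\Phi\in L^\infty_tL^6_x$. The two coupling terms then cancel in the limit.

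Your route is shorter and makes plain that the Onsager-type hypotheses are needed only for the Euler flux, since the electrostatic terms are subcritical. The paper's commutator formalism yields quantitative rates and treats all fluxes uniformly, but that is not needed for the energy equality itself.
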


\noindent Theorem \ref{Thm:Energy_equality_NPEuler} requires no additional assumptions on $\rho$ and $\s$ beyond Definition~\ref{def:weak_solution_NP_Euler} for the energy equality \eqref{eq:Energy_equality} to hold. To preserve non-negativity along the evolution, it suffices to assume the following.
\begin{Theorem}\label{Thm:non-negativity_NPEuler}
Suppose that a weak solution $(u,\rho,\s)$ to the system \eqref{eq:NP_Euler} satisfies the conditions in Theorem \ref{Thm:Energy_equality_NPEuler}. If $\rho, \s \in L^3([0,T];\Bb^{\frac{1}{3}}_{3,\infty}(\T^3))$, then $\th(t)$ and $\Theta(t)$ defined by \eqref{def:ion_concentration} remain non-negative whenever $\th_0$ and $\Theta_0$ are non-negative. Furthermore, $(u,\rho,\s)$ is a parabolic solution and satisfies the energy equality
\begin{align}
\haf \left( \twonorm{\rho(t)}^2 + \twonorm{\s(t)}^2 \right) - \haf \left( \twonorm{\rho_0}^2 + \twonorm{\s_0}^2 \right) \notag \\
= -\int_0^t \left( \twonorm{\grad \rho}^2 + \twonorm{\grad \s}^2 \right) ds - \int_0^t  \Ltwoinner{\s, |\rho|^2}  ds. \label{Thm:non-negativity_NPEuler_1}
\end{align}
\end{Theorem}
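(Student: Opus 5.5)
We plan to follow the Onsager commutator approach, now applied to the scalar pair $(\rho,\s)$. The strategy is to prove the energy identity \eqref{Thm:non-negativity_NPEuler_1} first, deduce parabolic regularity from it, and then obtain non-negativity by reducing to an argument in the spirit of Lemma~\ref{lemma:non-negativity_smooth_NPEuler}.

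\textbf{Step 1 (mollified identities).} We test \eqref{def:weak_solution_NP_Euler_2} with $\psi_1=\Ssq^2\rho$ and \eqref{def:weak_solution_NP_Euler_3} with $\psi_2=\Ssq^2\s$; this is admissible after time regularization, by the Remark following Definition~\ref{def:weak_solution_NP_Euler}. Adding the two gives
\begin{align*}
&\haf\left(\twonorm{\Ssq\rho(t)}^2+\twonorm{\Ssq\s(t)}^2\right)-\haf\left(\twonorm{\Ssq\rho_0}^2+\twonorm{\Ssq\s_0}^2\right)
+\int_0^t\left(\twonorm{\grad\Ssq\rho}^2+\twonorm{\grad\Ssq\s}^2\right)ds\\
&\quad=\int_0^t\left(\Ltwoinner{\Ssq(u\rho),\grad\Ssq\rho}+\Ltwoinner{\Ssq(u\s),\grad\Ssq\s}\right)ds
-\int_0^t\left(\Ltwoinner{\Ssq(\s\grad\Phi),\grad\Ssq\rho}+\Ltwoinner{\Ssq(\rho\grad\Phi),\grad\Ssq\s}\right)ds .
\end{align*}

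\textbf{Step 2 (transport terms).} Because $\divv u=0$, the term $\Ltwoinner{u\Ssq\rho,\grad\Ssq\rho}$ vanishes. The advective term is therefore a commutator $\Ltwoinner{\Ssq(u\rho)-u\Ssq\rho,\grad\Ssq\rho}$. We split it with the standard CCFS decomposition into low--high, high--low and high--high interactions. Each piece is bounded by a sum of the form $\sum_{p,p'}\l_{q}^{\cdot}\,\threenorm{\Delta_p u}\,\threenorm{\Delta_{p'}\rho}^2$ with a geometric kernel. Using Hölder's inequality in time with $u,\rho,\s\in L^3_t\Bb^{1/3}_{3,\infty}$ shows that these terms are uniformly bounded; this is enough for Step~4.

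To show that they vanish in the limit, we want the dyadic quantity $\l_p\threenorm{\Delta_p u}\threenorm{\Delta_p\rho}^2$ to tend to zero. The hypothesis \eqref{Thm:Energy_equality_NPEuler_1} gives this decay only for $u$. We therefore plan to exploit the diffusion: the commutator can be bounded by $\|\grad\Ssq\rho\|_{L^2}$ times a quantity that tends to zero. More precisely, we estimate $\|\Ssq(u\rho)-u\Ssq\rho\|_{L^2}$ with $u\in\Bb^{1/3}_{3,\infty}$, $\rho\in\Bb^{1/3}_{3,\infty}$ and Bernstein's inequality (Lemma~\ref{lemma:Bernstein}) to obtain a tail sum, and then use the absorbed gradient term through Young's inequality.

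\textbf{Step 3 (drift terms).} After integrating by parts, the limit of the drift terms is identified with $-\Ltwoinner{\s,|\rho|^2}$, coming from $-\Ltwoinner{\s\grad\Phi,\grad\rho}-\Ltwoinner{\rho\grad\Phi,\grad\s}=\Ltwoinner{\divv(\rho\s\grad\Phi)\dots}$. Formally,
\[
-\int\grad\Phi\cdot\grad(\rho\s)=\int\rho\s\,\Delta\Phi=-\int\s\rho^2 .
\]
Here $\grad\Phi\in W^{1,2}\cap\Bb^{4/3}_{3,\infty}$ is one derivative smoother than $\rho$, so the commutator $\Ssq(\s\grad\Phi)-\Ssq\s\,\grad\Phi$ is controlled by $\l_q^{-1}$ times tails. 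We pair it against $\grad\Ssq\rho$, and the combined product $\Ssq\s\,\grad\Phi\cdot\grad\Ssq\rho+\Ssq\rho\,\grad\Phi\cdot\grad\Ssq\s=\grad\Phi\cdot\grad(\Ssq\rho\Ssq\s)$ is integrated by parts, which converges because $\rho,\s\in L^3_tL^3_x$.

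\textbf{Step 4 (limit and conclusions).} The uniform bounds from Steps~2--3 give $\sup_q\int_0^T\twonorm{\grad\Ssq\rho}^2+\twonorm{\grad\Ssq\s}^2<\infty$. By weak lower semicontinuity this yields $\rho,\s\in L^2_tH^1_x$, so the solution is parabolic. Once this is known, the commutator terms vanish by dominated convergence, since the low--high pieces become $\|\grad\rho\|_{L^2}$ times $L^3$-tails, and the identity \eqref{Thm:non-negativity_NPEuler_1} follows.

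For non-negativity we repeat the proof of Lemma~\ref{lemma:non-negativity_smooth_NPEuler}. The only use of smoothness of $u$ there was to obtain $\p_t\th\in L^2H^{-1}$ and to justify $\Ltwoinner{u\cdot\grad\th,\th^-}=0$. Both can be recovered from $u\in L^3_tL^3_x$ and $\th\in L^2H^1$, except for the time-integrability of $\threenorm{u}\twonorm{\grad\th}$. That requires a regularization argument instead: we mollify, test with $-(\Ssq\th)^-$, and control the transport commutator exactly as in Step~2.

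\textbf{Main obstacle.} The main obstacle is Step~2, namely obtaining uniform $L^2H^1$ bounds and vanishing transport commutators without a vanishing-flux hypothesis on $\rho,\s$. The critical scaling $1/3+1/3+1/3=1$ only makes the flux bounded, so we must lean on the diffusion absorbing one derivative.
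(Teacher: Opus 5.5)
There is a genuine gap in the step you flag as the main obstacle: showing that the transport fluxes $\int_0^t\langle \mathcal{S}_q(u\rho),\nabla\mathcal{S}_q\rho\rangle\,ds$ (and the $\sigma$ analogue) vanish. Neither of your two mechanisms works under the stated hypotheses. First, you propose estimating $\mathcal{S}_q(u\rho)-u\,\mathcal{S}_q\rho$ in $L^2_x$ from the $\mathcal{B}^{1/3}_{3,\infty}$ norms and pairing it with $\|\nabla\mathcal{S}_q\rho\|_{L^2}$. This leads, up to a power of $\lambda_q$, to $\int_0^t\|u\|_{\mathcal{B}^{1/3}_{3,\infty}}\|\rho\|_{\mathcal{B}^{1/3}_{3,\infty}}\|\nabla\mathcal{S}_q\rho\|_{L^2}\,ds$. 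Its time exponents $\frac13+\frac13+\frac12$ exceed $1$. After Young's inequality you would need $\int_0^T\|u\|^2_{\mathcal{B}^{1/3}_{3,\infty}}\|\rho\|^2_{\mathcal{B}^{1/3}_{3,\infty}}ds<\infty$, which $L^3$-in-time control does not give. Second, the dominated convergence in Step 4 needs a majorant such as $\|u\|_{L^3}\|\nabla\rho\|_{L^2}^2$ (from the high--high piece with $\|\rho-\mathcal{S}_q\rho\|_{L^6}\lesssim\|\nabla\rho\|_{L^2}$). That majorant is not in $L^1(0,T)$ when $u\in L^3_t$ and $\nabla\rho\in L^2_t$; this is the same obstruction you point out yourself for the truncation argument. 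The diffusion cannot absorb this term at this time integrability.

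What you are missing is that no decay of $\rho,\sigma$ is needed, and your own H\"older step already finishes the job. Take $d_{f,j}=\lambda_j^{1/3}\|\Delta_j f\|_{L^3}$ and the $\ell^1$ kernel $K$. The CET decomposition, where the low--low part cancels because $\nabla\cdot\mathcal{S}_q u=0$, together with the $L^{3/2}$--$L^3$ pairing and $\|\nabla\mathcal{S}_q\rho\|_{L^3}\lesssim\lambda_q^{2/3}(K*d_\rho^2)^{1/2}$, bounds the flux by $(K*d_u^2)^{1/2}(K*d_\rho^2)(q,s)$. H\"older in time with exponents $3$ and $\frac32$, plus Jensen, then gives
\[
\int_0^t (K*d_u^2)^{1/2}(K*d_\rho^2)\,ds\lesssim\Big(\sum_{j}K(q-j)\int_0^t\lambda_j\|\Delta_j u\|_{L^3}^3\,ds\Big)^{1/3}\Big(\int_0^t (K*d_\rho^3)\,ds\Big)^{2/3}.
\]
The first factor tends to $0$ by the vanishing dyadic flux hypothesis on $u$, being an $\ell^1$ convolution of a $c_0$ sequence. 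The second factor is bounded by $\|\rho\|^2_{L^3\mathcal{B}^{1/3}_{3,\infty}}$. Since the flux is linear in $u$, decay for $u$ alone suffices; this is exactly how the paper concludes, for $\theta$ and $\Theta$.

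With this replacement the rest of your plan goes through. Your Step 3 is a valid alternative to the paper's $\theta,\Theta$ route: the Friedrichs commutator $[\mathcal{S}_q,\nabla\Phi]\sigma$ gains $\lambda_q^{-1}\|\nabla^2\Phi\|_{L^3}\lesssim\lambda_q^{-1}\|\rho\|_{L^3}$, and $\nabla\Phi\cdot\nabla(\mathcal{S}_q\rho\,\mathcal{S}_q\sigma)$ is integrated by parts. Your non-negativity argument also works. You test the $\mathcal{S}_q$-mollified $\theta$-equation with $-(\mathcal{S}_q\theta)^-$, using $|\nabla(\mathcal{S}_q\theta)^-|\le|\nabla\mathcal{S}_q\theta|$ and $\int\mathcal{S}_q u\cdot\nabla|(\mathcal{S}_q\theta)^-|^2dx=0$. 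This is in fact more careful than the paper, which invokes the truncation chain rule with $\partial_t\theta$ known only to lie in $L^1(0,T;H^{-1})$.
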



\subsection{Proof of Theorem \ref{Thm:Energy_equality_NPEuler}}
The proof is an adaptation of the Constantin--E--Titi commutator method as used in \cite{CCFS08}. We define the fluid energy flux $\Pi^u_q$ at the $q$-th dyadic shell by
\begin{equation}\label{def:Flow_energy_flux}
\Pi^u_q := \intthree \tr \left [ \Ss_q \left( u \otimes u \right) \cdot \nabla \Ss_q u \right] dx.
\end{equation}
Using $\Ss_q^2 u$ as a test function in the weak formulation of the Euler equation \eqref{def:weak_solution_NP_Euler_1}, and exploiting the self-adjointness of $\Ss_q$, we obtain
\begin{align*}
\haf \twonorm{\Ss_q u}^2 - \haf \twonorm{\Ssq u_0}^2 +  \int_0^t  \intthree \Ssq \div \left( u \otimes u \right) \cdot \Ss_q u \, dx ds &= - \int_0^t \intthree \Ssq(\rho \nabla \Phi) \cdot \Ss_q u \, dx ds
\end{align*}
\begin{equation}\label{proof_Thm_energy_equality_1}
\Rightarrow \ \haf  \twonorm{\Ss_q u}^2 - \haf \twonorm{\Ssq u_0}^2  = \int_0^t \Pi^u_q \,ds - \int_0^t \intthree \Ss_q \left( \rho \nabla \Phi \right) \cdot \Ss_q u \,dx ds.
\end{equation}
Similarly, testing the weak formulation \eqref{def:weak_solution_NP_Euler_2} with $\Ss^2_q \Phi$ yields
\begin{align}
\haf  \twonorm{\Ss_q \nabla \Phi}^2 - \haf \twonorm{\Ssq \grad \Phi_0}^2 &= \int_0^t \intthree \Ss_q (\rho u) \cdot \Ss_q \nabla \Phi \, dx ds - \int_0^t \intthree | \Ss_q \Delta \Phi |^2 \,dx ds \notag \\
&\ - \int_0^t \intthree \Ss_q (\s \nabla \Phi) \cdot \Ss_q \nabla \Phi \,dx ds.\label{proof_Thm_energy_equality_2}
\end{align}
Define the following ionic flux terms at the $q$-th dyadic scale by
\begin{align}
\Pi^1_q &:= \intthree \bigg( \Ss_q (\rho u) \cdot \Ss_q \nabla \Phi - \Ss_q (\rho \nabla \Phi) \cdot \Ss_q u \bigg) dx, \label{def:ion_energy_flux_1} \\
\Pi^2_q &:= \intthree \left[ \Big( \Ss_q \s \Ss_q \nabla \Phi  - \Ss_q (\s \nabla \Phi) \Big) \cdot \Ss_q \nabla \Phi \right] dx, \label{def:ion_energy_flux_2}
\end{align}
then adding \eqref{proof_Thm_energy_equality_1} and \eqref{proof_Thm_energy_equality_2} leads to
\begin{align}
  & \twonorm{\Ss_q u(t)}^2 - \twonorm{\Ss_q u_0}^2  +  \twonorm{\Ss_q \nabla \Phi(t)}^2 -  \twonorm{\Ss_q \nabla \Phi_0}^2  \notag \\
=\ &2 \int_0^t \Big( \Pi^u_q (s) + \Pi^1_q (s) +  \Pi^2_q (s) \Big) ds
- 2  \int_0^t \intthree |\Ssq \Delta \Phi|^2 dx ds - 2 \int_0^t \intthree \Ssq \s |\Ssq \nabla \Phi|^2 dx ds, \label{proof_Thm_energy_equality_3}
\end{align}
after integrating over time.

\noindent Hence, to establish the energy equality \eqref{eq:Energy_equality} it suffices to show that
\begin{equation}
\lim_{q \to \infty} \int_0^t \Big( |\Pi^u_q | + |\Pi^1_q | + |\Pi^2_q | \Big) ds  = 0. \label{proof_Thm_energy_equality_4}
\end{equation}
We also record the limiting properties of the non-flux terms in \eqref{proof_Thm_energy_equality_3}. For each fixed time, $\Ss_q u\to u$, $\Ss_q\rho\to\rho$, and $\Ss_q\s\to\s$ strongly in $L^2(\T^3)$. Moreover, from $-\Delta\Phi=\rho$ and the zero-mean normalization of $\Phi$, we have $\Ss_q\nabla\Phi\to\nabla\Phi$ strongly in $H^1(\T^3)$ and $\Ss_q\Delta\Phi\to\Delta\Phi$ strongly in $L^2(\T^3)$. Thus
\[
\int_0^t\twonorm{\Ss_q\Delta\Phi}^2ds\to \int_0^t\twonorm{\Delta\Phi}^2ds.
\]
For the drift dissipation, elliptic regularity gives $\nabla\Phi\in L^\infty(0,T;H^1(\T^3))\hookrightarrow L^\infty(0,T;L^6(\T^3))$, and hence $|\Ss_q\nabla\Phi|^2\to |\nabla\Phi|^2$ strongly in $L^r(0,t;L^3(\T^3))$ for every finite $r$. Together with $\Ss_q\s\to\s$ strongly in $L^{r'}(0,t;L^2(\T^3))$, H\"older's inequality yields
\[
\int_0^t\intthree \Ss_q\s |\Ss_q\nabla\Phi|^2dxds\to
\int_0^t\intthree \s |\nabla\Phi|^2dxds.
\]
Therefore it remains to prove the zero-flux condition \eqref{proof_Thm_energy_equality_4}.

We first consider the flux term $\Pi^u_q$. Note that one can write
\begin{equation}\label{proof_Thm_energy_equality_5}
\Ssq (u \otimes u) = \Ssq u \otimes \Ssq u + r_q (u,u) - \left( u - \Ssq u \right) \otimes \left( u - \Ssq u \right),
\end{equation}
where
\[
r_{q} (u,u) = \l^3_{q+1} \intthree \widetilde{h}(\l_{q+1} y) \left( u(x-y) - u(x) \right) \otimes \left( u(x-y) - u(x) \right) dy,
\]
which follows from \eqref{eq:q_mode_cutoff}. Then substituting \eqref{proof_Thm_energy_equality_5} into \eqref{def:Flow_energy_flux} yields
\begin{align*}
    \Pi^u_q &= \intthree \left[ \tr \Big( r_q (u,u) \cdot \nabla \Ss_q u \Big) \right] dx - \intthree \left[ \tr \Big( \left( u - \Ssq u \right) \otimes \left( u - \Ssq u \right) \cdot \nabla \Ss_q u \Big) \right] dx \\
    &:= A_1 + A_2
 \end{align*}
The term $A_1$ can be bounded by H\"older's inequality via
\[
A_1 = \intthree \left[ \tr \Big( r_q (u,u) \cdot \nabla \Ss_q u \Big) \right] dx \leq \threetwoNorm{r_q(u,u)} \threeNorm{\nabla \Ssq u}.
\]
Note that
\[
\threetwoNorm{r_q(u,u)} \leq \l^3_{q+1} \intthree \left| \widetilde{h}(\l_{q+1} y) \right| \threenorm{u(\cdot - y) - u(\cdot)}^2 dy.
\]
In view of Lemma \ref{lemma:Bernstein} and Corollary \ref{Corollary:Besov_embedding}, we can estimate as
\begin{align*}
\threenorm{u(\cdot - y) - u(\cdot)}^2 &\leq \threeNorm{\Ssq \left( u(\cdot - y) - u(\cdot) \right)}^2 + \threeNorm{\left(\IdM - \Ssq \right) \left( u(\cdot - y) - u(\cdot) \right)}^2 \\
&\les \sum_{j = -1}^q |y|^2 \l^2_j \threenorm{\Dj u}^2 + \sumjgtrq \threenorm{\Dj u}^2 \\
&\les \l_q^{\frac{4}{3}} \sumjtq |y|^2 \l^{-\frac{4}{3}}_{q-j} \left ( \l^{\onethird}_j \threenorm{\Dj u} \right)^2 + \l_q^{-\frac{2}{3}} \sumjgtrq \l^{\frac{2}{3}}_{q-j} \left( \l^{\frac{1}{3}}_j \threenorm{\Dj u} \right)^2.
\end{align*}
Denote the frequency localization kernel by
\begin{align}
K(q) &= \begin{cases} \label{def:frequency_localization_kernel}
\l^{\frac{2}{3}}_q, \ \ q \leq 0, \\\\
\l^{-\frac{4}{3}}_q, \ \ q > 0,
\end{cases}
\end{align}
and, for the velocity field,
\begin{align}
d_j(s) = \l^{\onethird}_j \threenorm{ \Delta_j u(s)}, \qquad j\geq -1,
\quad \text{and} \quad d_j(s)=0 \ \text{for } j<-1. \label{def:d}
\end{align}
We write
\[
(K*d^m)(q,s):=\sum_{j\geq -1}K(q-j)d_j(s)^m,
\]
and suppress the time-dependence when no confusion is possible. Then the bound of the above terms can be written as
\begin{equation}\label{proof_Thm_energy_equality_6}
\threenorm{u(\cdot - y) - u(\cdot)}^2 \les \left ( \l^{\frac{4}{3}}_q |y|^2 + \l_q^{-\frac{2}{3}} \right) \Big( K \ast d^2 \Big)(q)
\end{equation}
and by Corollary \ref{Corollary:Besov_embedding},
\begin{align*}
    A_1 &\les \Big( K \ast d^2 \Big)(q) \left( \l^3_{q+1} \intthree \l^{\frac{4}{3}}_q |y|^2 \widetilde{h}(\l_{q+1} y) dy + \l_q^{-\frac{2}{3}}  \right) \left( \sumjtq \l^2_j \threenorm{\Dj u}^2 \right)^\haf \\
    &\les \Big( K \ast d^2 \Big)(q) \l^{-\frac{2}{3}}_q \left( \sumjtq \l^2_j \threenorm{\Dj u}^2 \right)^\haf \\
    &\les \Big( K \ast d^2 \Big)^{\frac{3}{2}}(q).
\end{align*}
Similarly for $A_2$ we have that
\begin{align*}
    A_2 &\leq \threenorm{u - \Ssq u}^2 \threenorm{\nabla \Ssq u} \\
    &\les \left( \sumjgtrq \threenorm{\Dj u}^2 \right) \left( \sumjtq \l^2_j \threenorm{\Dj u}^2 \right)^\haf \\
    &\les \l^{-\frac{2}{3}}_q \left( \sumjgtrq \l_{q-j}^{\frac{2}{3}} \left( \l^{\frac{1}{3}}_j \threenorm{\Dj u} \right)^2 \right)  \left( \sumjtq \l^2_j \threenorm{\Dj u}^2 \right)^\haf \\
    &\les \Big( K \ast d^2 \Big)(q) \left( \sumjtq \l^{-\frac{4}{3}}_{q-j} \left(\l^{\onethird}_j \threenorm{\Dj u} \right)^2 \right)^\haf  = \Big( K \ast d^2 \Big)^{\frac{3}{2}}(q).
\end{align*}
These imply that
\[
|\Pi^u_q| \les \Big( K \ast d^2 \Big)^{\frac{3}{2}}(q)
\]
and furthermore
\begin{align*}
    \int_0^t |\Pi^u_q(s)| ds &\les \int_0^t \Big( K \ast d^2 \Big)^{\frac{3}{2}}(q,s) ds.
\end{align*}
We now keep the convolution structure instead of reducing it to a single dyadic shell. Let
$M_q:=\sum_{j\geq -1}K(q-j)$. Since $K\in\ell^1(\mathbb Z)$, $M_q\leq \|K\|_{\ell^1}$. By Jensen's inequality applied to the convex function $x\mapsto x^{3/2}$,
\begin{align*}
\Big( K \ast d^2 \Big)^{\frac{3}{2}}(q,s)
&=M_q^{\frac32}\left(\sum_{j\geq -1}\frac{K(q-j)}{M_q}d_j^2(s)\right)^{\frac32}\\
&\leq M_q^{\frac32}\sum_{j\geq -1}\frac{K(q-j)}{M_q}d_j^3(s)\\
&\leq \|K\|_{\ell^1}^{\frac12}\sum_{j\geq -1}K(q-j)d_j^3(s)
\les (K*d^3)(q,s).
\end{align*}
Therefore,
\begin{align}
\int_0^t |\Pi^u_q(s)| ds
&\les \sum_{j\geq -1}K(q-j)\int_0^t \lambda_j\|\Delta_j u(s)\|_{L^3(\mathbb T^3)}^3ds. \label{proof_Thm_energy_equality_7}
\end{align}
By \eqref{Thm:Energy_equality_NPEuler_1}, the sequence
$a_j:=\int_0^t \lambda_j\|\Delta_j u(s)\|_{L^3(\mathbb T^3)}^3ds$ tends to zero as $j\to\infty$. Since $K\in\ell^1(\mathbb Z)$, the right-hand side of \eqref{proof_Thm_energy_equality_7} is an $\ell^1$-convolution of $K$ with a sequence in $c_0$, and hence tends to zero as $q\to\infty$. Thus
\[
\lim_{q\to\infty}\int_0^t |\Pi^u_q(s)|ds=0.
\]

\noindent The fluxes $\Pi^1_q$ and $\Pi^2_q$ are handled with a similar idea. For the former, we write
\begin{align*}
\Ssq \left( \rho u \right) = \Ssq \rho \Ssq u + r_q (\rho, u) - \left(\rho-\Ssq \rho \right)\left(u-\Ssq u \right),
\end{align*}
where
\[
r_q(\rho,u) = \l^3_{q+1} \intthree \widetilde{h}(\l_{q+1} y) \left( \rho(x-y) - \rho(x) \right)\left( u(x-y) - u(x) \right) dy.
\]
In addition, we have
\[
\Ssq (\rho \nabla \Phi) = \Ssq \rho \Ssq \nabla \Phi + r_q (\rho,\nabla \Phi) - \left( \rho - \Ssq \rho \right) \left( \nabla \Phi - \Ssq \nabla \Phi \right),
\]
such that
\[
r_q(\rho,\nabla \Phi) = \l^3_{q+1} \intthree \widetilde{h}(\l_{q+1} y) \left( \rho(x-y) - \rho(x) \right)\left( \nabla \Phi(x-y) - \nabla \Phi(x) \right) dy.
\]
Therefore the integrand in $\Pi^1_q$ satisfies
\begin{align*}
 \Ssq \left( \rho u \right) \cdot \Ssq \nabla \Phi -  \Ssq (\rho \nabla \Phi) \cdot \Ssq u &=  r_q (\rho,u) \cdot \Ssq \nabla \Phi - r_q(\rho, \nabla \Phi) \cdot \Ssq u \\
 &- \left(\rho-\Ssq \rho \right)\left(u-\Ssq u \right) \cdot \Ssq \nabla \Phi + \left( \rho - \Ssq \rho \right) \left( \nabla \Phi - \Ssq \nabla \Phi \right) \cdot \Ssq u.
\end{align*}
We estimate the four terms on the right-hand side separately. By Sobolev embedding and the estimate \eqref{proof_Thm_energy_equality_6} we obtain that
\begin{align}
\intthree \left| r_q (\rho,u) \cdot \Ssq \nabla \Phi \right| dx &\leq \sixfiveNorm{r_q(\rho,u)} \sixNorm{\Ssq \nabla \Phi} \lesssim\sixfiveNorm{r_q(\rho,u)} \twonorm{\rho}, \notag \\
& \lesssim \twonorm{\rho} \intthree \l_{q+1}^3 \widetilde{h} (\l_{q+1} y) \twonorm{\rho(\cdot - y) - \rho(\cdot)} \threenorm{u(\cdot - y) - u(\cdot)} dy, \notag \\
&\lesssim \twonorm{\rho}^2 \intthree \l_{q+1}^3 \widetilde{h} (\l_{q+1} y) \threenorm{u(\cdot - y) - u(\cdot)} dy, \notag \\
& \les \twonorm{\rho}^2 \intthree \l_{q+1}^3 \widetilde{h} (\l_{q+1} y) \left( \l_q^{\frac{2}{3}} |y| +\l_q^{-\onethird} \right) \Big( K \ast d^2 \Big)^\haf (q) dy, \notag \\
&\les \twonorm{\rho}^2 \l^{-\onethird}_q \Big( K \ast d^2 \Big)^\haf (q). \label{proof_Thm_energy_equality_8}
\end{align}
For the term containing $r_q(\rho,\nabla\Phi)$ we use the elliptic relation $-\Delta\Phi=\rho$ and the low-frequency Bernstein estimate
\[
\Linfnorm{\Ssq u}
\leq \sum_{j=-1}^{q}\Linfnorm{\Delta_j u}
\les \sum_{j=-1}^{q}\lambda_j\threenorm{\Delta_j u}
\les \lambda_q^{\frac23}\|u\|_{\Bb^{\frac13}_{3,\infty}}
\sum_{j=-1}^{q}\lambda_{j-q}^{\frac23}
\les \lambda_q^{\frac23}\|u\|_{\Bb^{\frac13}_{3,\infty}}.
\]
Moreover,
\[
\onenorm{r_q(\rho,\nabla\Phi)}
\les \twonorm{\rho}\,\twonorm{\nabla^2\Phi}\intthree |y|\lambda_{q+1}^3|\widetilde h(\lambda_{q+1}y)|dy
\les \lambda_q^{-1}\twonorm{\rho}^2.
\]
Therefore
\begin{align}
    \intthree \left|r_q(\rho,\nabla \Phi) \cdot \Ssq u \right| dx
    &\leq \onenorm{r_q(\rho,\nabla \Phi)} \LinfNorm{\Ssq u} \notag \\
    &\les \lambda_q^{-\frac13}\twonorm{\rho}^2\|u\|_{\Bb^{\frac13}_{3,\infty}}. \label{proof_Thm_energy_equality_9}
\end{align}
Next, we bound the remaining terms. First,
\begin{align}
\intthree \left | \left( \rho - \Ssq \rho \right) (u - \Ssq u) \cdot \Ssq \nabla \Phi \right| dx &\leq \twonorm{\rho - \Ssq \rho} \threenorm{u - \Ssq u} \sixNorm{\Ssq \nabla \Phi} \notag \\
&\les \lambda_q^{-\onethird}\twonorm{\rho}^2 \Big( K \ast d^2 \Big)^\haf(q,s). \label{proof_Thm_energy_equality_10}
\end{align}
For the last term, since $-\Delta\Phi=\rho$,
\[
\twonorm{\grad\Phi-\Ssq\grad\Phi}
\les \lambda_q^{-1}\twonorm{\rho}.
\]
Together with the preceding bound for $\Linfnorm{\Ssq u}$, this gives
\begin{align}
&\intthree \left | \left( \rho - \Ssq \rho \right) (\grad \Phi - \Ssq \grad \Phi ) \cdot \Ssq u \right| dx \notag \\
&\qquad \leq \twonorm{\rho - \Ssq \rho} \twonorm{\grad \Phi - \Ssq \grad \Phi} \Linfnorm{\Ssq u} \notag \\
&\qquad \les \lambda_q^{-\frac13}\twonorm{\rho}^2\|u\|_{\Bb^{\frac13}_{3,\infty}}. \label{proof_Thm_energy_equality_11}
\end{align}
Collecting the estimates \eqref{proof_Thm_energy_equality_8}-\eqref{proof_Thm_energy_equality_11}, and using Jensen's inequality in the form
$(K*d^2)^{1/2}(q,s)\les (K*d^3)^{1/3}(q,s)$, we obtain
\begin{align}
\int_0^t |\Pi^1_q(s)|ds
&\les \|\rho\|^2_{L^{\infty}([0,T];L^2)}\lambda_q^{-\frac13}
\int_0^t \left[(K*d^2)^\haf(q,s)+\|u(s)\|_{\Bb^{\frac13}_{3,\infty}}\right]ds \notag\\
&\les \|\rho\|^2_{L^{\infty}([0,T];L^2)}\lambda_q^{-\frac13}
\left(
T^{\frac23}\left(\int_0^t (K*d^3)(q,s)ds\right)^{\frac13}
+\|u\|_{L^1(0,T;\Bb^{\frac13}_{3,\infty})}
\right). \label{proof_Thm_energy_equality_12}
\end{align}
The first factor in parentheses is bounded, and in fact tends to zero by \eqref{Thm:Energy_equality_NPEuler_1} and $K\in\ell^1(\mathbb Z)$, while the second factor is finite. Hence the explicit factor $\lambda_q^{-1/3}$ implies
\[
\lim_{q\to\infty}\int_0^t |\Pi^1_q(s)|ds=0.
\]

\noindent Finally, repeating the preceding argument gives the bound for $\Pi^2_q$. Note that the integrand of the flux becomes
\[
\Big( \Ssq \s \Ssq \grad \Phi - \Ssq (\s \grad \Phi) \Big) \cdot \Ssq \grad \Phi = \Big (- r_q (\s,\grad \Phi) +(\s - \Ss_q \s) (\grad \Phi - \Ssq \grad \Phi) \Big) \cdot \Ssq \grad \Phi,
\]
where
\[
r_q (\s,\grad \Phi) = \l^3_{q+1} \intthree \widetilde{h}(\l_{q+1} y) \left( \s(x-y) - \s(x) \right)\left( \nabla \Phi(x-y) - \nabla \Phi(x) \right) dy.
\]
We use the following estimate
\begin{align*}
\threenorm{\grad \Phi (\cdot - y) - \grad \Phi (\cdot)}^2 &\les \sumjtq |y|^2 \l_j^4 \threenorm{\Dj \Phi}^2 + \sumjgtrq \l_j^2 \threenorm{\Dj \Phi}^2 \\
&\les \sumjtq |y|^2 \l_j^5 \twonorm{\Dj \Phi}^2 + \sumjgtrq \l_j^3 \twonorm{\Dj \Phi}^2 \\
&\les |y|^2 \l_q \Honenorm{\grad \Phi}^2 + \l_q^\mone \Honenorm{\grad \Phi}^2,
\end{align*}
where we applied Lemma \ref{lemma:Bernstein}. It then holds that
\begin{align}
&\intthree \left| r_q (\s,\grad \Phi) \cdot \Ssq \grad \Phi \right| dx \leq \sixfiveNorm{r_q (\s, \grad \Phi)} \sixNorm{\Ssq \grad \Phi} \notag \\
\leq\ \ &\twonorm{\rho} \intthree \l_{q+1}^3 \wth (\l_{q+1} y) \twonorm{\s(\cdot-y) - \s(\cdot)} \threenorm{\nabla \Phi (\cdot - y) - \grad \Phi(\cdot)} dy \notag \\
\les\ \ &\l^{-\haf}_q \twonorm{\rho} \twonorm {\s} \Honenorm{\grad \Phi} \notag \\
\les\ \ &\l^{-\haf}_q \twonorm{\rho}^2 \twonorm {\s}. \label{proof_Thm_energy_equality_13}
\end{align}
Hence, 
\begin{align*}
    \intthree \l_{q+1}^3 \wth(\l_{q+1} y) \threenorm{\grad \Phi (\cdot -y) -\grad \Phi(\cdot)} dy &\les \Honenorm{\grad \Phi} \intthree \l^\haf_q |y| \l^3_{q+1} \wth(\l_{q+1} y) dy  \\
    &+     \Honenorm{\grad \Phi} \intthree \l^{-\haf}_q  \l^3_{q+1} \wth(\l_{q+1} y) dy    \\
    &\les \l^{-\haf}_q \Honenorm{\grad \Phi}.
\end{align*}
On the other hand, using Bernstein's inequality again, we have
\begin{align}
\intthree \left | \left( \s - \Ssq \s \right) (\grad \Phi - \Ssq \grad \Phi ) \cdot \Ssq \grad \Phi \right| dx &\leq \twonorm{\s - \Ssq \s} \threenorm{\grad \Phi - \Ssq \grad \Phi} \sixNorm{\Ssq \grad \Phi} \notag \\
&\les \twonorm{\s}  \left( \sumjgtrq \l^2_j \threenorm{\Dj \Phi}^2 \right)^\haf \Honenorm{\grad \Phi} \notag \\
&\les \l^{-\haf}_q \twonorm{\rho} \twonorm{\s}  \left( \sumjgtrq \l^4_j \twonorm{\Dj \Phi}^2 \right)^\haf  \notag \\
&\les  \l_q^{-\haf} \twonorm{\rho}^2 \twonorm{\s} \label{proof_Thm_energy_equality_14}.
\end{align}
The bounds \eqref{proof_Thm_energy_equality_13} and \eqref{proof_Thm_energy_equality_14} vanish uniformly as $q$ tends to infinity. Thus it follows directly that
\begin{equation}\label{proof_Thm_energy_equality_15}
\lim_{q \to \infty} \int_0^t |\Pi^2_q| ds  = 0.
\end{equation}
Combining \eqref{proof_Thm_energy_equality_7}, \eqref{proof_Thm_energy_equality_12}, and \eqref{proof_Thm_energy_equality_15}, we establish the zero-flux condition \eqref{proof_Thm_energy_equality_4} and conclude the proof.    \qed

\subsection{Proof of Theorem \ref{Thm:non-negativity_NPEuler}} \label{proof:Thm:non-negativity_NPEuler}
We first prove the parabolic regularity and the energy identity for $\th$. Testing \eqref{lemma:non-negativity_smooth_NPEuler_1} with $\Ss_q^2\th$, using the self-adjointness of $\Ss_q$, and using that $\Ss_q$ commutes with derivatives, we obtain
\begin{align}
    &\haf \twonorm{\Ss_q \th (t)}^2 - \haf \twonorm{\Ss_q \th_0}^2 +  \int_0^t \twonorm{\Ss_q \nabla \th(s)}^2 ds \notag \\
    &= \int_0^t \Ltwoinner{\Ssq (\th u), \Ssq \grad \th} ds + \int_0^t \Ltwoinner{\Ssq (\th \grad \Phi) - \Ssq \th \Ssq \grad \Phi, \Ssq \grad \th} ds  + \haf \int_0^t \Ltwoinner{|\Ssq \th|^2, \Ssq \rho} ds. \label{proof:Thm_non-negativity_NPEuler_1}
\end{align}
Indeed, the last term follows from the elliptic relation $-\Delta\Phi=\rho$:
\[
\Ltwoinner{\Ssq \th \Ssq \grad\Phi,\Ssq\grad\th}
=\haf\intthree \Ssq\grad\Phi\cdot\grad |\Ssq\th|^2\,dx
=-\haf\intthree \Ssq\Delta\Phi |\Ssq\th|^2\,dx
=\haf\Ltwoinner{|\Ssq\th|^2,\Ssq\rho}.
\]
We next show that the first two terms on the right-hand side of \eqref{proof:Thm_non-negativity_NPEuler_1} vanish as $q\to\infty$. For $f\in\{u,\th,\Theta,\rho\}$, set
\[
d_{f,j}(s)=\lambda_j^{\frac13}\threenorm{\Delta_j f(s)},\qquad j\ge -1,
\]
and set $d_{f,j}=0$ for $j<-1$. We also write
\[
(K*d_f^m)(q,s):=\sum_{j\ge -1}K(q-j)d_{f,j}(s)^m,
\]
where $K$ is the kernel defined in \eqref{def:frequency_localization_kernel}. Since $K\in\ell^1(\mathbb Z)$, Jensen's inequality gives
\begin{equation}\label{proof:Thm_non-negativity_NPEuler_Jensen}
(K*d_f^2)^{\frac12}(q,s)\les (K*d_f^3)^{\frac13}(q,s),
\qquad
(K*d_f^2)(q,s)\les (K*d_f^3)^{\frac23}(q,s).
\end{equation}
The standard translation estimates used above give
\begin{equation}\label{proof:Thm_non-negativity_NPEuler_translation}
\threenorm{f(\cdot-y)-f(\cdot)}
\les \left(\lambda_q^{\frac23}|y|+\lambda_q^{-\frac13}\right)(K*d_f^2)^{\frac12}(q,s),
\end{equation}
and, in particular,
\begin{equation}\label{proof:Thm_non-negativity_NPEuler_cutoff}
\threenorm{\Ssq\grad f}\les \lambda_q^{\frac23}(K*d_f^2)^{\frac12}(q,s),
\qquad
\threenorm{f-\Ssq f}\les \lambda_q^{-\frac13}(K*d_f^2)^{\frac12}(q,s).
\end{equation}

For the transport contribution, we use the Constantin--E--Titi product identity
\begin{equation}\label{proof:Thm_non-negativity_NPEuler_product}
\Ssq(fg)=\Ssq f\,\Ssq g+r_q(f,g)-(f-\Ssq f)(g-\Ssq g),
\end{equation}
where
\[
r_q(f,g)(x)=\lambda_{q+1}^3\intthree \widetilde h(\lambda_{q+1}y)
\big(f(x-y)-f(x)\big)\big(g(x-y)-g(x)\big)\,dy.
\]
The low-low term cancels exactly:
\[
\Ltwoinner{\Ssq\th\,\Ssq u,\Ssq\grad\th}
=\haf\intthree \Ssq u\cdot\grad |\Ssq\th|^2\,dx
=-\haf\intthree \div(\Ssq u)|\Ssq\th|^2\,dx=0.
\]
By \eqref{proof:Thm_non-negativity_NPEuler_translation}--\eqref{proof:Thm_non-negativity_NPEuler_cutoff},
\begin{align*}
\left|\Ltwoinner{r_q(\th,u),\Ssq\grad\th}\right|
&\les (K*d_u^2)^{\frac12}(q,s)(K*d_\th^2)(q,s),\\
\left|\Ltwoinner{(\th-\Ssq\th)(u-\Ssq u),\Ssq\grad\th}\right|
&\les (K*d_u^2)^{\frac12}(q,s)(K*d_\th^2)(q,s).
\end{align*}
Consequently,
\begin{align*}
\int_0^t \left|\Ltwoinner{\Ssq(\th u),\Ssq\grad\th}\right|ds
&\les \int_0^t (K*d_u^2)^{\frac12}(q,s)(K*d_\th^2)(q,s)\,ds \\
&\les \left(\int_0^t (K*d_u^3)(q,s)\,ds\right)^{\frac13}
\left(\int_0^t (K*d_\th^3)(q,s)\,ds\right)^{\frac23}.
\end{align*}
The first factor tends to zero by \eqref{Thm:Energy_equality_NPEuler_1} and $K\in\ell^1(\mathbb Z)$, while the second factor is uniformly bounded because $\th=\haf(\s-\rho)\in L^3([0,T];\Bb^{\frac13}_{3,\infty})$. Hence
\begin{equation}\label{proof:Thm_non-negativity_NPEuler_transport_limit}
\lim_{q\to\infty}\int_0^t \left|\Ltwoinner{\Ssq(\th u),\Ssq\grad\th}\right|ds=0.
\end{equation}

For the drift commutator, we again use \eqref{proof:Thm_non-negativity_NPEuler_product}:
\[
\Ssq(\th\grad\Phi)-\Ssq\th\,\Ssq\grad\Phi
=r_q(\th,\grad\Phi)-(\th-\Ssq\th)(\grad\Phi-\Ssq\grad\Phi).
\]
The elliptic relation $-\Delta\Phi=\rho$ and the Riesz transform bounds on the torus imply
\[
\|\nabla^2\Phi\|_{L^3}\les \threenorm{\rho}\les \|\rho\|_{\Bb^{\frac13}_{3,\infty}},
\]
and hence
\[
\threenorm{\grad\Phi(\cdot-y)-\grad\Phi(\cdot)}\les |y|\,\|\rho\|_{\Bb^{\frac13}_{3,\infty}},
\qquad
\threenorm{\grad\Phi-\Ssq\grad\Phi}\les \lambda_q^{-\frac43}\|\rho\|_{\Bb^{\frac13}_{3,\infty}}.
\]
Combining these bounds with \eqref{proof:Thm_non-negativity_NPEuler_translation} and \eqref{proof:Thm_non-negativity_NPEuler_cutoff}, we get
\begin{align*}
\left|\Ltwoinner{r_q(\th,\grad\Phi),\Ssq\grad\th}\right|
&\les \lambda_q^{-\frac23}(K*d_\th^2)(q,s)\|\rho(s)\|_{\Bb^{\frac13}_{3,\infty}},\\
\left|\Ltwoinner{(\th-\Ssq\th)(\grad\Phi-\Ssq\grad\Phi),\Ssq\grad\th}\right|
&\les \lambda_q^{-\frac23}(K*d_\th^2)(q,s)\|\rho(s)\|_{\Bb^{\frac13}_{3,\infty}}.
\end{align*}
Therefore, by \eqref{proof:Thm_non-negativity_NPEuler_Jensen} and H\"older's inequality in time,
\begin{align*}
&\int_0^t \left|\Ltwoinner{\Ssq(\th\grad\Phi)-\Ssq\th\,\Ssq\grad\Phi,\Ssq\grad\th}\right|ds \\
&\qquad\les \lambda_q^{-\frac23}
\left(\int_0^t (K*d_\th^3)(q,s)\,ds\right)^{\frac23}
\|\rho\|_{L^3([0,T];\Bb^{\frac13}_{3,\infty})}\to0.
\end{align*}
Thus
\begin{equation}\label{proof:Thm_non-negativity_NPEuler_drift_limit}
\lim_{q\to\infty}\int_0^t \left|\Ltwoinner{\Ssq(\th\grad\Phi)-\Ssq\th\,\Ssq\grad\Phi,\Ssq\grad\th}\right|ds=0.
\end{equation}

The last term in \eqref{proof:Thm_non-negativity_NPEuler_1} remains in the limiting energy identity. It is uniformly controlled by
\[
\int_0^t \left|\Ltwoinner{|\Ssq\th|^2,\Ssq\rho}\right|ds
\le \int_0^t \threenorm{\Ssq\rho}\threenorm{\Ssq\th}^2ds
\les \int_0^t \|\rho\|_{\Bb^{\frac13}_{3,\infty}}\|\th\|_{\Bb^{\frac13}_{3,\infty}}^2ds<\infty.
\]
It follows from \eqref{proof:Thm_non-negativity_NPEuler_1} and the preceding bounds that
\[
\sup_q\int_0^t\twonorm{\Ssq\grad\th(s)}^2ds<\infty,
\]
so $\th\in L^2([0,T];H^1(\T^3))$. Consequently, $\Ssq\grad\th\to\grad\th$ strongly in $L^2([0,T];L^2)$, and $\Ssq\rho\to\rho$, $\Ssq\th\to\th$ strongly in $L^3([0,T];L^3)$. Passing to the limit in \eqref{proof:Thm_non-negativity_NPEuler_1} yields
\begin{equation}\label{proof:Thm_non-negativity_NPEuler_theta_energy}
\haf \twonorm{\th(t)}^2 - \haf \twonorm{\th_0}^2
= -\int_0^t \twonorm{\grad\th}^2ds + \haf\int_0^t\Ltwoinner{\rho,|\th|^2}ds.
\end{equation}

Applying the same argument to \eqref{lemma:non-negativity_smooth_NPEuler_2} gives the opposite sign in the drift contribution:
\begin{equation}\label{proof:Thm_non-negativity_NPEuler_Theta_energy}
\haf \twonorm{\Theta(t)}^2 - \haf \twonorm{\Theta_0}^2
= -\int_0^t \twonorm{\grad\Theta}^2ds - \haf\int_0^t\Ltwoinner{\rho,|\Theta|^2}ds.
\end{equation}
In particular, $\Theta\in L^2([0,T];H^1(\T^3))$, and therefore $(u,\rho,\s)$ is parabolic.

We now prove non-negativity. Since $\th,\Theta\in L^2([0,T];H^1)$, the equations imply that the corresponding time derivatives lie in a suitable negative Sobolev space, for instance $L^1(0,T;H^{-1})$. Thus the standard convex-truncation chain rule allows us to test the actual equations with $-\th^-$ and $-\Theta^-$. This gives
\begin{align*}
\haf\twonorm{\th^-(t)}^2-\haf\twonorm{\th^-_0}^2
&=-\int_0^t\twonorm{\grad\th^-}^2ds+\haf\int_0^t\Ltwoinner{\rho,|\th^-|^2}ds,\\
\haf\twonorm{\Theta^-(t)}^2-\haf\twonorm{\Theta^-_0}^2
&=-\int_0^t\twonorm{\grad\Theta^-}^2ds-\haf\int_0^t\Ltwoinner{\rho,|\Theta^-|^2}ds.
\end{align*}
For $f=\th$ or $f=\Theta$, the drift term is bounded by
\[
\left|\intthree \rho |f^-|^2\,dx\right|
\le \twonorm{\rho}\fournorm{f^-}^2
\les \twonorm{\rho}\left(\twonorm{f^-}^{\frac12}\twonorm{\grad f^-}^{\frac32}+\twonorm{f^-}^2\right).
\]
Young's inequality gives
\[
\haf\twonorm{f^-(t)}^2
\le C\left(1+\|\rho\|_{L^\infty([0,T];L^2)}^4\right)
\int_0^t\twonorm{f^-(s)}^2ds.
\]
If $f^-_0=0$, Gr\"onwall's lemma implies $f^-=0$ on $[0,T]$. Hence $\th,\Theta\ge0$ whenever $\th_0,\Theta_0\ge0$.

Finally, multiplying the sum of \eqref{proof:Thm_non-negativity_NPEuler_theta_energy} and \eqref{proof:Thm_non-negativity_NPEuler_Theta_energy} by $2$ and using
\[
\Theta=\haf \left( \s+ \rho \right),
\qquad
\th=\haf \left( \s - \rho \right),
\qquad
\th^2-\Theta^2=-\rho\s,
\]
we obtain
\begin{align*}
\haf\left(\twonorm{\rho(t)}^2+\twonorm{\s(t)}^2\right)
-\haf\left(\twonorm{\rho_0}^2+\twonorm{\s_0}^2\right)
&= -\int_0^t\left(\twonorm{\grad\rho}^2+\twonorm{\grad\s}^2\right)ds \\
&\quad -\int_0^t\Ltwoinner{\s,|\rho|^2}ds,
\end{align*}
which is \eqref{Thm:non-negativity_NPEuler_1}. \qed

\section{The Nernst--Planck--Navier--Stokes equations}\label{sec:NPNL}

\noindent The electrodiffusion model coupled to a viscous incompressible fluid is governed by the Nernst--Planck--Navier--Stokes (NPNS) system:
\begin{subequations} \label{eq:NPNS}
\begin{align}
    \p_t \rho + u \cdot \nabla \rho &=   \Delta \rho + \nabla \s \cdot \nabla \Phi +\s \Delta \Phi, \label{eq:NPNS_1}  \\
    \p_t \s + u \cdot \nabla \s &=   \Delta \s + \nabla \rho \cdot \nabla \Phi +\rho \Delta \Phi,  \label{eq:NPNS_2} \\
    - \Delta \Phi &= \rho, \label{eq:NPNS_3}\\
    \p_t u + (u \cdot \nabla )u + \nabla p &=  \Delta u - \rho \nabla \Phi, \label{eq:NPNS_4} \\
    \nabla \cdot u &= 0.  \label{eq:NPNS_5}
\end{align}
\end{subequations}
Here we take the viscosity $\nu=1$ for simplicity.

\begin{Definition}\label{def:weak_solution_NPNS}
We call $(u,\rho, \s)$ a weak solution to the system \eqref{eq:NPNS} on $[0,T]$ with weakly divergence-free initial data $u_0 \in L^2 (\T^3)$ and $\rho_0, \s_0 \in L^2 (\T^3)$, satisfying the compatibility condition $\intthree \rho_0\,dx=0$, if 
\begin{align*}
u \in C_w ([0,T];L^2(\T^3)) \cap L^2([0,T];H^1(\T^3)), \ \ \rho,\s \in C_w ([0,T];L^2(\T^3)),
\end{align*}
and satisfies the following weak formulations. 
\begin{align}
&\Ltwoinner{u(t),\phi(t)} - \Ltwoinner{u_0,\phi(0)} - \int_0^t \Ltwoinner{u(s),\p_s \phi(s)} ds 
=\int_0^t \left( \Ltwoinner{ u \cdot \nabla \phi , u }  - \Ltwoinner{\grad u, \grad \phi}  -  \Ltwoinner{  \rho \nabla \Phi , \phi } \right) ds , \label {def:weak_solution_NPNS_1} \\
&\Ltwoinner{\rho(t),\psi_1(t)} - \Ltwoinner{\rho_0,\psi_1(0)} - \int_0^t \Ltwoinner{\rho(s),\p_s \psi_1(s) + u \cdot \nabla \psi_1 + \Delta \psi_1} ds  = - \int_0^t \Ltwoinner{\s \nabla \Phi, \nabla \psi_1}ds, \label {def:weak_solution_NPNS_2} \\
&\Ltwoinner{\s(t),\psi_2(t)} - \Ltwoinner{\s_0,\psi_2(0)} - \int_0^t \Ltwoinner{\s(s),\p_s \psi_2(s) + u \cdot \nabla \psi_2 + \Delta \psi_2} ds  = -  \int_0^t \Ltwoinner{\rho \nabla \Phi, \nabla \psi_2}ds, \label {def:weak_solution_NPNS_3}
\end{align}
\begin{equation}\label{def:weak_solution_NPNS_4}
\Ltwoinner{\rho+\Delta\Phi,\psi_3}=0.
\end{equation}
The potential is normalized by $\intthree \Phi(t,x)\,dx=0$. These identities hold for every $t\in[0,T]$ and every $\phi,\psi_1,\psi_2\in C^\infty([0,T]\times\T^3)$ such that $\phi$ is divergence-free and $\psi_3\in C^\infty(\T^3)$. In addition, $\nabla\cdot u=0$ in $\mathcal D'((0,T)\times\T^3)$.\\

We further call a weak solution a \textit{Leray--Hopf solution} if it satisfies the following energy inequality: 
\begin{align}
& \twonorm{u(t)}^2   +  \twonorm{\nabla \Phi(t)}^2  + 2 \int_0^t \twonorm{\grad u}^2 ds +2  \int_0^t \twonorm{\Delta \Phi}^2 ds \notag\\
+\ & 2 \int_0^t \int_{\T^3} \s|\nabla \Phi|^2 dx ds
 \leq \twonorm{u_0}^2 + \twonorm{\nabla \Phi_0}^2, \ \ \ \forall t \in (0,T], \label{eq:NPNS_Energy_inequality}
\end{align}
where $\Phi_0 = (-\Delta)^{-1} \rho_0$ such that $\int_{\mathbb T^3} \Phi_0 dx =0$.
\end{Definition}
\begin{Remark}\label{rem:sigma_mass_conservation_NPNS}
Every weak solution in Definition \ref{def:weak_solution_NPNS} conserves the spatial average of the total concentration $\s$. As above, this is justified by regularizing the distributional equation before integrating in space. From \eqref{def:weak_solution_NPNS_3},
\[
\p_t\s+\divv(u\s)=\Delta\s+\divv(\rho\nabla\Phi)
\quad\text{in }\mathcal D'((0,T)\times\T^3).
\]
Let $\mathcal J_\epsilon$ be a periodic Friedrichs mollifier in $x$. Then
\[
\p_t \mathcal J_\epsilon\s+\divv\mathcal J_\epsilon(u\s)
=\Delta \mathcal J_\epsilon\s+\divv\mathcal J_\epsilon(\rho\nabla\Phi)
\quad\text{in }\mathcal D'((0,T)\times\T^3).
\]
Integrating this smooth-in-space identity over $\T^3$ gives
\[
\frac{d}{dt}\intthree \mathcal J_\epsilon\s(t,x)\,dx=0
\quad\text{in }\mathcal D'(0,T).
\]
Since $\mathcal J_\epsilon$ preserves the zero Fourier mode, this implies
\[
\frac{d}{dt}\intthree \s(t,x)\,dx=0
\quad\text{in }\mathcal D'(0,T).
\]
Using $\s\in C_w([0,T];L^2(\T^3))$ to identify the continuous representative in time, we obtain
\[
\intthree \s(t,x)\,dx=\intthree \s_0(x)\,dx=:C,
\qquad t\in[0,T].
\]
Thus, in the sequel, we may assume $\intthree \s(t,x)\,dx=C$ for weak solutions of \eqref{eq:NPNS}.
\end{Remark}

The following uniqueness criterion is the main result of this section.
\begin{Theorem}\label{Thm:Serrin_NPNL}
Let $(u,\rho^u,\s^u)$ and $(v,\rho^v,\s^v)$ be two Leray--Hopf solutions on $[0,T]$ of the system \eqref{eq:NPNS} with the same initial data.
If the velocity field $u$ satisfies the Serrin condition
\begin{equation}\label{Thm:Serrin_NPNL_1}
u \in L^p([0,T];L^q(\T^3)),
\qquad
\frac{2}{p} + \frac{3}{q} = 1, \qquad (p,q) \in [2,\infty) \times (3,\infty],
\end{equation}
then $u=v$, $\rho^u=\rho^v$, and $\s^u=\s^v$ on $[0,T]$.
\end{Theorem}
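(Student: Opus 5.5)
We follow the classical weak--strong uniqueness scheme of Serrin, adapted to the coupled kinetic--electrostatic energy. Set $w=u-v$, $\eta=\rho^u-\rho^v$, $\zeta=\s^u-\s^v$ and $\Psi=\Phi^u-\Phi^v$, so that $-\Delta\Psi=\eta$. The functional to control is
\[
E(t)=\twonorm{w(t)}^2+\twonorm{\grad\Psi(t)}^2+\twonorm{\zeta(t)}_{H^{-1}}^2 ,
\]
where the last term is needed because $\s$ enters the $\rho$-equation through the drift $\s\grad\Phi$. Since the initial data coincide, $\zeta$ has zero mean by Remark \ref{rem:sigma_mass_conservation_NPNS}, so the $H^{-1}$ norm is well defined. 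We expand $E(t)=E_u+E_v-2\,\mathrm{cross}$. The quantities $E_u$ and $E_v$ are bounded above by the Leray--Hopf energy inequality \eqref{eq:NPNS_Energy_inequality} applied to both solutions. The cross terms $\Ltwoinner{u,v}$ and $\Ltwoinner{\grad\Phi^u,\grad\Phi^v}$ must satisfy an exact identity, obtained by testing the equation for $v$ against $u$ and the equation for $\rho^v$ against $\Phi^u$, and conversely.

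\textbf{Step 1: justify the cross identities.} We mollify in space and time, as in the Remark following Definition \ref{def:weak_solution_NP_Euler}, and test \eqref{def:weak_solution_NPNS_1} for $v$ with the mollified $u$ and vice versa. The Serrin condition makes every trilinear term converge. For instance, $\Ltwoinner{v\cdot\grad u,v}$ is bounded by $\|u\|_{L^q}\|v\|_{L^{2q/(q-2)}}\|\grad v\|_{L^2}$. Interpolating $v\in L^\infty L^2\cap L^2H^1$ shows that $L^p_tL^q_x$ is exactly the integrability needed to make this $L^1$ in time, and the same holds for the time derivative of $u$, which lies in the appropriate dual space. The electrostatic cross terms involve $\rho\grad\Phi$ and $\s\grad\Phi$ with $\grad\Phi\in L^\infty H^1$ and $\Delta\Phi\in L^2L^2$ (the latter by the energy inequality), which suffice after Sobolev embedding. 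This step is technical but standard. The main difficulty is the $\s$ coupling, since the energy inequality gives no $H^1$ bound on $\s$. For that reason the cross terms should be arranged so that $\s$ appears only as $\Ltwoinner{\s,\grad\Phi^u\cdot\grad\Phi^v}$, which is integrable using $\grad\Phi\in L^2W^{1,2}\subset L^2L^6$ and $\s\in L^\infty L^2$.

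\textbf{Step 2: relative energy inequality.} Combining these identities gives
\[
E(t)+2\int_0^t\big(\twonorm{\grad w}^2+\twonorm{\eta}^2\big)ds\le 2\int_0^t\Ltwoinner{w\cdot\grad u,w}\,ds+\mathcal R(t).
\]
The remainder $\mathcal R$ collects the differences of the electric forces and drifts. Typical terms are $\Ltwoinner{\eta\grad\Phi^u,w}$, $\Ltwoinner{\rho^v\grad\Psi,w}$, $\Ltwoinner{w\,\rho^u,\grad\Psi}$, and $\Ltwoinner{\zeta\grad\Phi^u+\s^v\grad\Psi,\grad\Psi}$. The convective term is handled as in Serrin's argument:
\[
|\Ltwoinner{w\cdot\grad u,w}|=|\Ltwoinner{w\cdot\grad w,u}|\le\|u\|_{L^q}\twonorm{w}^{1-3/q}\twonorm{\grad w}^{1+3/q}\le\tfrac14\twonorm{\grad w}^2+C\|u\|_{L^q}^p\twonorm{w}^2 .
\]
Here $\|u\|_{L^q}^p\in L^1(0,T)$. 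The electric terms are absorbed using the dissipations $\twonorm{\eta}^2=\twonorm{\Delta\Psi}^2$ and $\twonorm{\grad w}^2$. For example, $|\Ltwoinner{\eta\grad\Phi^u,w}|\le\twonorm{\eta}\|\grad\Phi^u\|_{L^6}\|w\|_{L^3}$, and interpolation plus Young's inequality gives a coefficient $\twonorm{\Delta\Phi^u}^2\in L^1$ multiplying $\twonorm{w}^2$. The $\zeta$ terms are estimated in the same way, using the $H^{-1}$ energy of $\zeta$ and its dissipation $\twonorm{\zeta}^2$.

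\textbf{Step 3: conclusion.} The estimates yield $E(t)\le\int_0^t g(s)E(s)\,ds$ with $g\in L^1(0,T)$ and $E(0)=0$, so Gr\"onwall's lemma gives $E\equiv0$. Hence $w=0$ and $\eta=0$. The equation for $\zeta$ then becomes a linear drift--diffusion equation with zero data, which forces $\zeta=0$. The main obstacle is Step 2's treatment of the $\s$ differences: because $\s$ is only $L^\infty L^2$, the term $\Ltwoinner{\zeta\grad\Phi^u,\grad\Psi}$ must be closed in weak norms. We would first try $H^{-1}$ duality, $|\Ltwoinner{\zeta,\grad\Phi^u\cdot\grad\Psi}|\le\|\zeta\|_{H^{-1}}\|\grad\Phi^u\cdot\grad\Psi\|_{H^1}$, controlling the right-hand side by $\|\Delta\Phi^u\|\,\|\Delta\Psi\|$ together with a second-order bound for $\Phi^u$. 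If that fails, we would fall back on an $L^1$-in-time $H^2$ bound for $\Phi^u$ obtained via $\rho^u\in L^2H^1$, which itself follows from the Serrin condition.
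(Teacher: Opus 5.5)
Your proposal is correct and is essentially the paper's proof. It uses the same relative-energy functional $\|w\|_{L^2}^2+\|\nabla\Psi\|_{L^2}^2+\|\zeta\|_{H^{-1}}^2$; the paper likewise tests the $\sigma$-difference equation with $(-\Delta)^{-1}\zeta$. It justifies the cross identities for $\langle u,v\rangle$ and $\langle\nabla\Phi^u,\nabla\Phi^v\rangle$ by regularized testing under the Serrin bound. It applies Serrin interpolation to the convective term and to the transport term $\langle u\zeta,\nabla(-\Delta)^{-1}\zeta\rangle$, and it concludes with Gr\"onwall using a coefficient in $L^1(0,T)$.

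One correction to your Step 3: the ``main obstacle'' is not one. The $H^{-1}$ estimate already supplies the dissipation $\int_0^t\|\zeta\|_{L^2}^2\,ds$, and this closes
\[
|\langle\zeta\nabla\Phi^u,\nabla\Psi\rangle|\le\|\zeta\|_{L^2}\|\nabla\Phi^u\|_{L^6}\|\nabla\Psi\|_{L^3}\lesssim\|\zeta\|_{L^2}\|\rho^u\|_{L^2}\|\nabla\Psi\|_{L^2}^{1/2}\|\Delta\Psi\|_{L^2}^{1/2}
\]
directly by Young's inequality. This is how the paper treats $I_3$.

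You should drop both of your proposed remedies:
\begin{itemize}
\item $\|\nabla\Phi^u\cdot\nabla\Psi\|_{H^1}$ is not bounded by $\|\Delta\Phi^u\|_{L^2}\|\Delta\Psi\|_{L^2}$ in three dimensions, because $H^1$ is not an algebra there.
\item $\rho^u\in L^2H^1$ is not a consequence of the Serrin condition on $u$ alone. The paper's parabolicity criterion requires extra integrability of the concentrations.
\end{itemize}

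Finally, $E\equiv0$ already forces $\zeta=0$, since $\zeta$ has zero mean. The closing appeal to a linear drift--diffusion equation is therefore unnecessary.
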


\begin{Remark}
The condition on the exponents \(p\) and \(q\) in \eqref{Thm:Serrin_NPNL_1} is known as the Prodi--Serrin condition for the Navier--Stokes equations. Related sharpness and nonuniqueness phenomena for Navier--Stokes are discussed in \cite{CL22}.
\end{Remark}

\begin{Lemma}[Cross identities under the Serrin condition]\label{lem:Serrin_cross_identities}
Let $(u,\rho^u,\s^u)$ and $(v,\rho^v,\s^v)$ be two Leray--Hopf solutions of \eqref{eq:NPNS} on $[0,T]$. Assume that $u$ satisfies the Serrin condition \eqref{Thm:Serrin_NPNL_1}. Then, for every $t\in[0,T]$, the maps
\[
 s\mapsto \Ltwoinner{u(s),v(s)},
 \qquad
 s\mapsto \Ltwoinner{\grad\Phi^u(s),\grad\Phi^v(s)}
\]
have absolutely continuous representatives, and the following identities hold:
\begin{align}
\Ltwoinner{u(t),v(t)}
&=\Ltwoinner{u_0,v_0}-2\int_0^t\intthree \grad u:\grad v\,dxds
-\int_0^t\Ltwoinner{((u-v)\cdot\grad)(u-v),u}\,ds \notag\\
&\quad -\int_0^t\intthree \left(\rho^u v\cdot\grad\Phi^u+\rho^v u\cdot\grad\Phi^v\right)\,dxds, \label{lemma:Serrin_cross_velocity}
\end{align}
and
\begin{align}
\Ltwoinner{\grad\Phi^u(t),\grad\Phi^v(t)}
&=\Ltwoinner{\grad\Phi^u_0,\grad\Phi^v_0}
-2\int_0^t\Ltwoinner{\rho^u,\rho^v}\,ds \notag\\
&\quad +\int_0^t\intthree \left(\rho^u u\cdot\grad\Phi^v+\rho^v v\cdot\grad\Phi^u\right)\,dxds \notag\\
&\quad -\int_0^t\intthree (\s^u+\s^v)\grad\Phi^u\cdot\grad\Phi^v\,dxds. \label{lemma:Serrin_cross_potential}
\end{align}
\end{Lemma}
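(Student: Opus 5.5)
We propose to prove both identities by mollifying in space, testing each weak formulation with the mollified version of the \emph{other} solution, adding, and passing to the limit with the Serrin condition supplying the needed integrability. Fix a periodic Friedrichs mollifier $\mathcal J_\epsilon$ in $x$, which is self-adjoint, commutes with derivatives and preserves divergence-free fields. Applying $\mathcal J_\epsilon$ to the distributional equations turns them into identities smooth in space. Then $\partial_t\mathcal J_\epsilon u$ and $\partial_t\mathcal J_\epsilon v$ lie in $L^1(0,T;H^k)$ for every $k$. We compute $\frac{d}{dt}\langle \mathcal J_\epsilon u,\mathcal J_\epsilon v\rangle$, equivalently use $\mathcal J_\epsilon^2 v$ as a test function in \eqref{def:weak_solution_NPNS_1} for $u$ and $\mathcal J_\epsilon^2 u$ in the formulation for $v$, after the time regularization described in the Remark following Definition~\ref{def:weak_solution_NP_Euler}. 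This gives an integral identity for $\langle \mathcal J_\epsilon u(t),\mathcal J_\epsilon v(t)\rangle$ with the following terms: viscous terms $-2\int\langle\nabla\mathcal J_\epsilon u,\nabla \mathcal J_\epsilon v\rangle$, convective terms $\int\langle \mathcal J_\epsilon(u\otimes u),\nabla\mathcal J_\epsilon v\rangle+\int\langle\mathcal J_\epsilon(v\otimes v),\nabla\mathcal J_\epsilon u\rangle$, and forcing terms $-\int\langle\mathcal J_\epsilon(\rho^u\nabla\Phi^u),\mathcal J_\epsilon v\rangle-\int\langle\mathcal J_\epsilon(\rho^v\nabla\Phi^v),\mathcal J_\epsilon u\rangle$. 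For the potential identity we proceed in the same way, testing \eqref{def:weak_solution_NPNS_2} for $\rho^u$ with $\mathcal J_\epsilon^2\Phi^v$ and for $\rho^v$ with $\mathcal J_\epsilon^2\Phi^u$. We then use $\langle\mathcal J_\epsilon\nabla\Phi^u,\mathcal J_\epsilon\nabla\Phi^v\rangle=\langle\mathcal J_\epsilon\rho^u,\mathcal J_\epsilon\Phi^v\rangle$ and the symmetry $\langle\partial_t\rho^u,\Phi^v\rangle$ versus $\langle\rho^u,\partial_t\Phi^v\rangle=\langle\partial_t\rho^v,\Phi^u\rangle$, which is the cross analogue of the computation before \eqref{eq:Energy_equality_3}. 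The Laplacian terms produce $-2\langle\rho^u,\rho^v\rangle$ after using $-\Delta\Phi=\rho$. The transport terms produce $\rho^u u\cdot\nabla\Phi^v$ and $\rho^v v\cdot\nabla\Phi^u$. The drift terms produce $-(\s^u+\s^v)\nabla\Phi^u\cdot\nabla\Phi^v$.

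We then pass to the limit $\epsilon\to0$ using the Leray--Hopf bounds: $u,v\in L^\infty L^2\cap L^2H^1$; $\rho^{u},\rho^v,\s^u,\s^v\in L^\infty L^2$; and $\nabla\Phi\in L^\infty H^1\subset L^\infty L^6$, so $\rho\nabla\Phi\in L^\infty L^{3/2}$. The viscous, forcing, and all electrostatic terms converge by strong convergence of $\mathcal J_\epsilon$ in the relevant Lebesgue spaces and dominated convergence in time. For example, $\rho^u u\cdot\nabla\Phi^v$ is in $L^1_{t,x}$ since $u\in L^2L^6$ and $\rho^u\nabla\Phi^v\in L^\infty L^{3/2}$. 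Likewise $\s\nabla\Phi^u\cdot\nabla\Phi^v$ is bounded by $L^2\cdot L^6\cdot L^6\subset L^1$. The point-in-time terms converge for every $t$ because $u(t),v(t)$ and $\rho(t)$ are well-defined elements of $L^2$ by weak continuity, and $\mathcal J_\epsilon w\to w$ strongly in $L^2$ for any fixed $w\in L^2$. The right-hand sides are integrals of $L^1$ functions of time, which gives the absolutely continuous representatives.

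The main obstacle is the convective terms, which is where the Serrin condition on $u$ enters. The term $\int\langle\mathcal J_\epsilon(u\otimes u),\nabla\mathcal J_\epsilon v\rangle$ converges because $|u|^2\nabla v$ is integrable: by H\"older and interpolation, $\|u\|_{L^q}\|u\|_{L^{2q/(q-2)}}\|\nabla v\|_{L^2}$ with $\|u\|_{L^{2q/(q-2)}}\lesssim\|u\|_{L^2}^{1-3/q}\|\nabla u\|_{L^2}^{3/q}$ lies in $L^1_t$ exactly when $2/p+3/q=1$. The $v\otimes v$ term is only $L^{4/3}L^{2}$-type paired with $\nabla u\in L^2L^2$, so it is not integrable in general. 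We will first integrate it by parts to $-\langle (v\cdot\nabla)\mathcal J_\epsilon u, \mathcal J_\epsilon v\rangle$ type expressions and move the derivative so that $u$ carries the $L^pL^q$ norm. Since $v\cdot\nabla v$ is in $L^1L^{3/2}$, we will write $\langle\mathcal J_\epsilon(v\otimes v),\nabla\mathcal J_\epsilon u\rangle=-\langle\mathcal J_\epsilon((v\cdot\nabla)v),\mathcal J_\epsilon u\rangle$. The integrand $((v\cdot\nabla)v)\cdot u$ is bounded by $\|v\|_{L^{2q/(q-2)}}\|\nabla v\|_{L^2}\|u\|_{L^q}$, which is again in $L^1_t$ by the same interpolation applied to $v$. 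Both limits therefore exist by dominated convergence, with the commutator $\mathcal J_\epsilon$ handled by strong convergence in $L^{q'}$-type spaces.

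Finally, an algebraic rearrangement using $\nabla\cdot u=\nabla\cdot v=0$ identifies the limiting convective contribution $\int\langle u\cdot\nabla v,u\rangle-\int\langle v\cdot\nabla v,u\rangle$ with $-\int\langle((u-v)\cdot\nabla)(u-v),u\rangle$, because $\langle (u-v)\cdot\nabla u,u\rangle=0$. That cancellation itself requires $|u|^2\nabla(u-v)\in L^1$, which holds by the same Serrin bound. This yields \eqref{lemma:Serrin_cross_velocity} and \eqref{lemma:Serrin_cross_potential}.
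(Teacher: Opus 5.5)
Your proposal is correct and follows essentially the same route as the paper. You regularize in space with $\mathcal J_\epsilon^2$ where the paper uses $\eta_\varepsilon*(P_N\cdot)$, test each weak formulation with the regularized other solution, pass to the limit with the Leray--Hopf bounds for the viscous and electrostatic terms and the Serrin bound $\|u\|_{L^q}\|w\|_{L^{2q/(q-2)}}\|\nabla z\|_{L^2}\in L^1_t$ for the convective ones, and then rearrange using $\nabla\cdot u=\nabla\cdot v=0$. Your explicit rewriting of the $v\otimes v$ term as $-\langle (v\cdot\nabla)v,u\rangle$ is in fact the step the paper leaves implicit when it calls $(v\cdot\nabla)u\cdot v$ integrable, since its bound only covers products with an undifferentiated $u$. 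One small correction: the cancellation $\langle ((u-v)\cdot\nabla)u,u\rangle=0$ needs $|u-v|\,|u|\,|\nabla u|\in L^1$ rather than $|u|^2\nabla(u-v)\in L^1$, though both follow from the same Serrin bound.
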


\begin{proof}
We justify the identities by a standard approximation argument. Let $P_N$ be a Fourier projection and let $\eta_\varepsilon$ be a time mollifier. In the weak formulation for $u$ we use the divergence-free test function $\eta_\varepsilon*(P_Nv)$, and in the weak formulation for $v$ we use $\eta_\varepsilon*(P_Nu)$; a smooth time cut-off approximating $\mathbbm{1}_{[0,t]}$ is inserted and then removed. The linear terms and the force terms pass to the limit by the Leray--Hopf bounds. The only terms that require the Serrin condition are the convective terms. For $w,z\in\{u,v\}$, the interpolation inequality
\[
\|w\|_{L^{\frac{2q}{q-2}}}
\le C\twonorm{w}^{1-\frac{3}{q}}\twonorm{\grad w}^{\frac{3}{q}}
\]
(with the usual interpretation when $q=\infty$) and \eqref{Thm:Serrin_NPNL_1} give
\begin{align*}
\int_0^T \qnorm{u}\,\|w\|_{L^{\frac{2q}{q-2}}}\twonorm{\grad z}\,ds
&\le C\|u\|_{L^p_tL^q_x}\|w\|_{L^\infty_tL^2_x}^{1-\frac{3}{q}}
\|\grad w\|_{L^2_tL^2_x}^{\frac{3}{q}}
\|\grad z\|_{L^2_tL^2_x}<\infty.
\end{align*}
Thus the products $(u\cdot\grad) v\cdot u$ and $(v\cdot\grad )u\cdot v$ are integrable, and the limit of the regularized weak formulations gives, in the sense of distributions in time,
\[
\frac{d}{dt}\Ltwoinner{u,v}
=-2\intthree\grad u:\grad v\,dx
+\intthree (u\cdot\grad v)\cdot u\,dx
+\intthree (v\cdot\grad u)\cdot v\,dx
-\intthree \left(\rho^u v\cdot\grad\Phi^u+\rho^v u\cdot\grad\Phi^v\right)dx.
\]
Since $\divv u=\divv v=0$, by integration by parts the two convective terms reduce to
\[
\intthree (u\cdot\grad v)\cdot u\,dx+
\intthree (v\cdot\grad u)\cdot v\,dx
=-\Ltwoinner{((u-v)\cdot\grad)(u-v),u}.
\]
The right-hand side belongs to $L^1(0,T)$, so $\Ltwoinner{u,v}$ is absolutely continuous and the fundamental theorem of calculus gives \eqref{lemma:Serrin_cross_velocity}.

For the second identity, we use the same regularization in the two weak formulations for $\rho^u$ and $\rho^v$, with test functions $\eta_\varepsilon*(P_N\Phi^v)$ and $\eta_\varepsilon*(P_N\Phi^u)$, respectively. Passing to the limit is justified by
\[
\rho^u,\rho^v,\s^u,\s^v\in L^\infty(0,T;L^2),\quad
\grad\Phi^u,\grad\Phi^v\in L^\infty(0,T;L^6),\quad
u,v \in L^2(0,T;H^1)\subseteq L^1(0,T;L^3)
\]
for the Leray--Hopf velocities $u,v$, together with the elliptic estimate $\|\grad\Phi\|_{L^6}\le C\twonorm{\rho}$. Hence, distributionally in time,
\begin{align*}
\frac{d}{dt}\Ltwoinner{\grad\Phi^u,\grad\Phi^v}
&=-2\Ltwoinner{\rho^u,\rho^v}
+\intthree \left(\rho^u u\cdot\grad\Phi^v+\rho^v v\cdot\grad\Phi^u\right)dx \\
&\quad -\intthree (\s^u+\s^v)\grad\Phi^u\cdot\grad\Phi^v\,dx.
\end{align*}
The right-hand side belongs to $L^1(0,T)$, and the fundamental theorem of calculus yields \eqref{lemma:Serrin_cross_potential}.
\end{proof}

\begin{proof}[Proof of Theorem \ref{Thm:Serrin_NPNL}]
Let $(u,\rho^u,\s^u)$ and $(v,\rho^v,\s^v)$ be two Leray--Hopf solutions on $[0,T]$ with the same initial conditions $(u_0,\rho^u_0,\s^u_0)=(v_0,\rho^v_0,\s^v_0)$. We apply the relative energy method (see \cite{BV22}). Denote
\[
\om := u-v,\qquad \barphi := \Phi^u-\Phi^v,\qquad \barrho := \rho^u-\rho^v,\qquad \barsgm := \s^u-\s^v,
\]
where
\[
\rho^u = -\Delta \Phi^u, \ \ \rho^v = -\Delta \Phi^v.
\]
The equal initial data and conservation of the $\rho$- and $\s$-masses give
\[
\intthree \barrho(t,x)\,dx=0,\qquad \intthree \barsgm(t,x)\,dx=0,
\]
for a.e. $t\in[0,T]$. With the normalization $\intthree\barphi(t,x)\,dx=0$, the operators $\lb^{-2}=(-\Delta)^{-1}$ and $\lb^{-1}$ are therefore well-defined on these difference variables.
Using the energy inequalities \eqref{eq:NPNS_Energy_inequality} for both solutions yields
\begin{align}
&\haf \twonorm{\om(t)}^2 + \int_0^t \twonorm{\grad \om}^2 ds + \haf \twonorm{\grad \barphi(t)}^2 + \int_0^t \twonorm{\Delta \barphi}^2 ds \notag \\
+\  &\int_0^t \intthree \s^u(x,s) |\grad \barphi|^2 dxds + \int_0^t \intthree \s^v(x,s) |\grad \barphi|^2 dxds \notag \\
\leq &\  \twonorm{u_0}^2 + \twonorm{\grad \Phi^u_0}^2 - \Ltwoinner{u(t),v(t)}
-2 \int_0^t \intthree \grad u : \grad v dx ds - \Ltwoinner{\grad \Phi^u(t), \grad \Phi^v(t)} - 2 \int_0^t \Ltwoinner{\rho^u, \rho^v} ds \notag \\
+\  &\int_0^t \intthree \left(  \s^u(x,s) |\grad \Phi^v|^2 + \s^v(x,s) |\grad \Phi^u|^2 \right) dxds -2 \int_0^t \intthree \left( \s^u + \s^v \right) \grad \Phi^u \cdot \grad \Phi^v dx ds \label{proof_Thm_Serrin_NPNL_1}
\end{align}
By Lemma \ref{lem:Serrin_cross_identities}, applied on the interval $[0,t]$, the cross terms in \eqref{proof_Thm_Serrin_NPNL_1} satisfy
\begin{align}
\Ltwoinner{u(t),v(t)}
&=\twonorm{u_0}^2-2\int_0^t\intthree \grad u:\grad v\,dxds
-\int_0^t\Ltwoinner{(\om\cdot\grad)\om,u}\,ds \notag\\
&\quad -\int_0^t\intthree \left( \rho^u v\cdot\grad \Phi^u + \rho^v u\cdot\grad \Phi^v \right)\,dxds, \label{proof_Thm_Serrin_NPNL_cross_u}
\end{align}
and
\begin{align}
\Ltwoinner{\grad \Phi^u(t),\grad \Phi^v(t)}
&=\twonorm{\grad \Phi^u_0}^2-2\int_0^t\Ltwoinner{\rho^u,\rho^v}\,ds \notag\\
&\quad +\int_0^t\intthree \left(\rho^u u\cdot\grad \Phi^v+\rho^v v\cdot\grad \Phi^u\right)\,dxds \notag\\
&\quad -\int_0^t\intthree (\s^u+\s^v)\grad \Phi^u\cdot\grad \Phi^v\,dxds. \label{proof_Thm_Serrin_NPNL_cross_phi}
\end{align}
The inequality \eqref{proof_Thm_Serrin_NPNL_1} then becomes
\begin{align}
    &\haf \twonorm{\om(t)}^2 + \int_0^t \twonorm{\grad \om}^2 ds + \haf \twonorm{\grad \barphi(t)}^2 + \int_0^t \twonorm{\Delta \barphi}^2 ds \notag \\
+\ &\int_0^t \intthree \s^u(x,s) |\grad \barphi|^2 dxds + \int_0^t \intthree \s^v(x,s) |\grad \barphi|^2 dxds \notag \\
\leq \ &\int_0^t \Ltwoinner{(\om \cdot \grad)\om,u} ds + \int_0^t \intthree \left( \rho^u v \cdot \grad \Phi^u + \rho^v u \cdot \grad \Phi^v - \rho^u u \cdot \grad \Phi^v - \rho^v v \cdot \grad \Phi^u \right) dx ds \notag \\
+\ &\int_0^t \intthree \left( \s^u |\grad \Phi^v|^2 + \s^v |\grad \Phi^u|^2 \right) dxds - \int_0^t \intthree (\s^u + \s^v) \grad \Phi^u \cdot \grad \Phi^v dx ds \notag \\
=\  &\int_0^t \Ltwoinner{(\om \cdot \grad)\om,u} ds + \int_0^t \intthree \left(\om \cdot \grad \Phi^u - u \cdot \grad \barphi \right) \Delta \barphi dx ds \ + \int_0^t \intthree \s^v |\grad \barphi|^2 - \barsgm \grad \Phi^v \cdot \grad \barphi dx ds \notag \\
:=\  &I_1 + I_2 + I_3.  \label{proof_Thm_Serrin_NPNL_2}
\end{align}
We now estimate \(I_1, I_2,\) and \(I_3\). For \(I_1\), we apply the Gagliardo--Nirenberg inequality to obtain
\begin{align}
|I_1| &\leq \int_0^t \|u \|_{L^q(\T^3)} \| \om \|_{L^{\frac{2q}{q-2}}(\T^3)} \twonorm{\grad \om} ds \les \int_0^t \|u \|_{L^q(\T^3)} \twonorm{\om}^{1-\frac{3}{q}} \twonorm{\grad \om}^{1+\frac{3}{q}} ds \notag \\
&\leq C \int_0^t \|u \|^p_{L^q(\T^3)} \twonorm{\om}^2 ds + \frac{1}{4} \int_0^t \twonorm{\grad \om}^2 ds \label{proof_Thm_Serrin_NPNL_3}.
\end{align}
For $I_2$, we similarly have
\begin{align}
|I_2| &\leq \int_0^t \intthree \left|\left(\om \cdot \grad \Phi^u - u \cdot \grad \barphi \right) \Delta \barphi\right| dx ds  \notag  \\
&\leq C \int_0^t \twonorm{\rho^u}^4 \twonorm{\om}^2 ds + \frac{1}{8} \int_0^t \twonorm{\Delta \barphi}^2 ds + \frac{1}{4} \int_0^t \twonorm{\grad \om}^2  ds \notag \\
&+ C \int_0^t \|u \|^p_{L^q(\T^3)} \twonorm{\grad \barphi}^2 ds + \frac{1}{8} \int_0^t \twonorm{\Delta \barphi}^2 ds. \label{proof_Thm_Serrin_NPNL_4}
\end{align}
Likewise,
\begin{align}
|I_3| &\leq \int_0^t \twonorm{\s^v}\fournorm{\grad\barphi}^2ds
+\int_0^t\twonorm{\barsgm}\sixNorm{\grad\Phi^v}\threenorm{\grad\barphi}ds \notag\\
&\leq C\int_0^t\left(\twonorm{\rho^v}^4+\twonorm{\s^v}^4\right)\twonorm{\grad\barphi}^2ds
+\frac14\int_0^t\twonorm{\Delta\barphi}^2ds
+\frac14\int_0^t\twonorm{\barsgm}^2ds. \label{proof_Thm_Serrin_NPNL_5}
\end{align}
Here we used $\fournorm{\grad\barphi}^2\lesssim \twonorm{\grad\barphi}^{1/2}\twonorm{\Delta\barphi}^{3/2}$, $\threenorm{\grad\barphi}\lesssim\twonorm{\grad\barphi}^{1/2}\twonorm{\Delta\barphi}^{1/2}$, the Sobolev embedding $H^1(\T^3)\hookrightarrow L^6(\T^3)$, and the elliptic estimate $\sixNorm{\grad\Phi^v}\lesssim\twonorm{\rho^v}$. To control the last term in \eqref{proof_Thm_Serrin_NPNL_5}, we consider the weak formulation for $\s^u$ and $\s^v$. By subtracting the equation \eqref{def:weak_solution_NPNS_3} of $\s^v$ from that of $\s^u$ and testing the zero-mean equation with $\lb^{-2}\barsgm=(-\Delta)^{-1}\barsgm$, we obtain 
\begin{align}
\haf \twonorm{\lb^{-1} \barsgm(t)}^2 + \int_0^t \twonorm{\barsgm}^2 ds &= \int_0^t \Ltwoinner{\s^v \om + \barsgm u, \grad \lb^{-2} \barsgm} ds - \int_0^t \Ltwoinner{\barrho \grad \Phi^v + \rho^u \grad \barphi, \grad \lb^{-2} \barsgm}ds \notag \\
&:=J_1 + J_2, \notag
\end{align}
where the Fourier multiplier $\lb^s$ is defined on zero-mean functions by
\[
\widehat {\lb^s f} (k) = |k|^s \hat {f} (k),\qquad k\ne0.
\]
We estimate $J_1$ and $J_2$ in the same fashion as $I_1$, $I_2$, and $I_3$:
\begin{align}
    |J_1| &\leq \int_0^t \twonorm{\s^v} \threenorm{\om} \sixNorm{\grad \lb^{-2} \barsgm} ds + \int_0^t \qnorm{u} \twonorm{\barsgm} \| \grad \lb^{-2} \barsgm \|_{L^{\frac{2q}{q-2}}(\T^3)} ds  \notag \\
    &\les \int_0^t \twonorm{\s^v} \twonorm{\om}^\haf \twonorm{\grad \om}^\haf \twonorm{\barsgm} ds + \int_0^t \qnorm{u} \twonorm{\lb^{-1} \barsgm}^{1-\frac{3}{q}} \twonorm{\barsgm}^{1+\frac{3}{q}} ds   \notag \\
    &\leq C \int_0^t \twonorm{\s^v}^2 \twonorm{\om} \twonorm{\grad \om} ds + \frac{1}{4} \int_0^t \twonorm{\barsgm}^2 ds \notag \\
    &+ C \int_0^t \qnorm{u}^p \twonorm{\lb^{-1} \barsgm}^2 ds +\frac{1}{4} \int_0^t \twonorm{\barsgm}^2 ds \notag \\
    &\leq C \int_0^t \twonorm{\s^v}^4 \twonorm{\om}^2 ds + \frac{1}{4} \int_0^t \twonorm{\grad \om}^2 ds  \notag \\
    & + C \int_0^t \qnorm{u}^p \twonorm{\lb^{-1} \barsgm}^2 ds + \haf \int_0^t \twonorm{\barsgm}^2 ds, \notag
\end{align}
\begin{align}
|J_2| &\leq \int_0^t \twonorm{\barrho} \threenorm{\grad \lb^{-2} \barsgm} \sixNorm{\grad \Phi^v} ds + \int_0^t \twonorm{\rho^u} \sixNorm{\grad \barphi} \threenorm{\grad \lb^{-2} \barsgm} ds  \notag \\
&\leq \int_0^t \left( \twonorm{\rho^u} + \twonorm{\rho^v} \right) \twonorm{\lb^{-1} \barsgm}^\haf \twonorm{\barsgm}^\haf \twonorm{\Delta \barphi} ds \notag \\
&\leq C \int_0^t \left( \twonorm{\rho^u}^4 + \twonorm{\rho^v}^4 \right) \twonorm{\lb^{-1} \barsgm}^2 ds + \frac{1}{4} \int_0^t \twonorm{\barsgm}^2 ds + \frac{1}{8} \int_0^t \twonorm{\Delta \barphi}^2 ds  \notag
\end{align}
Consequently,
\begin{align}
\haf \twonorm{\lb^{-1} \barsgm(t)}^2 + \int_0^t \twonorm{\barsgm}^2 ds &\leq  C \int_0^t \twonorm{\s^v}^4 \twonorm{\om}^2 ds + \frac{1}{4} \int_0^t \twonorm{\grad \om}^2 ds  \notag \\
    & + C \int_0^t \qnorm{u}^p \twonorm{\lb^{-1} \barsgm}^2 ds + \haf \int_0^t \twonorm{\barsgm}^2 ds \notag \\
    & + C \int_0^t \left( \twonorm{\rho^u}^4 + \twonorm{\rho^v}^4 \right) \twonorm{\lb^{-1} \barsgm}^2 ds \notag \\
    & + \frac{1}{4} \int_0^t \twonorm{\barsgm}^2 ds + \frac{1}{8} \int_0^t \twonorm{\Delta \barphi}^2 ds. \label{proof_Thm_Serrin_NPNL_6}
\end{align}
Finally, notice that 
\begin{align}
    \int_0^t \intthree |\s^u(x,s)| |\grad \barphi|^2 dxds &\leq \int_0^t \twonorm{\s^u} \fournorm{\grad \barphi}^2 ds \notag \\
    &\leq  \int_0^t \twonorm{\s^u} \twonorm{\grad \barphi}^\haf \twonorm{\Delta \barphi}^{\frac{3}{2}} ds \notag \\
    &\leq C \int_0^t \twonorm{\s^u}^4 \twonorm{\grad \barphi}^2 ds + \frac{1}{4} \int_0^t \twonorm{\Delta \barphi}^2 ds \label{proof_Thm_Serrin_NPNL_7}
\end{align}
and similarly,
\begin{equation}\label{proof_Thm_Serrin_NPNL_8}
\int_0^t \intthree |\s^v(x,s)| |\grad \barphi|^2 dxds \leq C \int_0^t \twonorm{\s^v}^4 \twonorm{\grad \barphi}^2 ds + \frac{1}{4} \int_0^t \twonorm{\Delta \barphi}^2 ds
\end{equation}
Set
\[
\xi(t) := \twonorm{\om(t)}^2 + \twonorm{\grad \barphi}^2 + \twonorm{\lb^{-1} \barsgm}^2.
\]
Combining \eqref{proof_Thm_Serrin_NPNL_2}--\eqref{proof_Thm_Serrin_NPNL_8}, we conclude that 
\begin{align*}
&\haf \xi(t) + \int_0^t \twonorm{\grad \om}^2 ds + \int_0^t \twonorm{\Delta \barphi}^2 ds + \int_0^t \twonorm{\barsgm}^2 ds \\
\leq\ &C \int_0^t \qnorm{u}^p \xi(s) ds + C \int_0^t \left( \twonorm{\rho^u}^4 + \twonorm{\rho^v}^4 + \twonorm{\s^u}^4 + \twonorm{\s^v}^4 \right) \xi(s) ds \\
-\ &\int_0^t \intthree \s^u(x,s) |\grad \barphi|^2 dxds - \int_0^t \intthree \s^v(x,s) |\grad \barphi|^2 dxds\\ 
+\ &\frac{3}{4} \int_0^t \twonorm{\grad \om}^2 ds + \haf \int_0^t \twonorm{\Delta \barphi}^2 ds + \int_0^t \twonorm{\barsgm}^2 ds \\
\leq\ &C \int_0^t \qnorm{u}^p \xi(s) ds + C \int_0^t \left( \twonorm{\rho^u}^4 + \twonorm{\rho^v}^4 + \twonorm{\s^u}^4 + \twonorm{\s^v}^4 \right) \xi(s) ds \\
+\ &\frac{3}{4} \int_0^t \twonorm{\grad \om}^2 ds + \int_0^t \twonorm{\Delta \barphi}^2 ds + \int_0^t \twonorm{\barsgm}^2 ds.
\end{align*}

Subtracting the dissipation terms on the right-hand side from the left-hand side gives
\[
\xi(t)\le C\int_0^t a(s)\xi(s)\,ds,
\]
where
\[
a(s)=\qnorm{u(s)}^p+\twonorm{\rho^u(s)}^4+\twonorm{\rho^v(s)}^4+\twonorm{\s^u(s)}^4+\twonorm{\s^v(s)}^4\in L^1(0,T).
\]
Gr\"onwall's inequality gives $\xi(t)\equiv0$ for all $t\in[0,T]$, which completes the proof.
\end{proof}

\begin{Definition}\label{def:Leray_Hopf&Parabolic_NPNS}
A weak solution $(u,\rho,\s)$ to the system \eqref{eq:NPNS} on $[0,T]$ is called \textit{parabolic} if
\[
\rho,\s \in L^2([0,T];H^1(\T^3)).
\]
\end{Definition}
Next, we establish a regularity criterion implying parabolicity in Theorem~\ref{Thm:Regularity_NPNS}, and then further prove regularity criteria that ensure non-negativity of the ion concentrations in Corollary~\ref{Coro:non-negativity_NPNS}.

\begin{Theorem}\label{Thm:Regularity_NPNS}
Let $(u,\rho,\s)$ be a weak solution of \eqref{eq:NPNS}, and let $p,q \in [2,\infty]$.
Assume in addition that
\[
u, \grad \Phi \in L^p(0,T;L^q(\T^3)).
\]
If there exist $r,s \in [2,\infty]$ such that
\[
\rho,\s \in L^r(0,T;L^s(\T^3)),
\]
where the exponents satisfy
\begin{equation}\label{Thm:Regularity_NPNS_1}
\frac{1}{q} + \frac{1}{s} = \haf, \qquad \frac{1}{p} + \frac{1}{r} = \haf.
\end{equation}
Then the solution is parabolic.
\end{Theorem}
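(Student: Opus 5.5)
Since parabolicity means $\rho,\s\in L^2(0,T;H^1)$, we will produce a uniform-in-$q$ bound on $\int_0^t\twonorm{\Ssq\grad\rho}^2+\twonorm{\Ssq\grad\s}^2\,ds$ from a frequency-truncated energy identity, following the proof of Theorem~\ref{Thm:non-negativity_NPEuler}. No commutator estimate is needed here: the exponent condition \eqref{Thm:Regularity_NPNS_1} is exactly what Hölder requires to put the fluxes in $L^2_{t,x}$, and they can then be absorbed by the dissipation.

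The first step is the truncated identity. Write the equations for $\rho$ and $\s$ in divergence form:
\[
\p_t\rho+\divv(u\rho)=\Delta\rho+\divv(\s\grad\Phi),\qquad \p_t\s+\divv(u\s)=\Delta\s+\divv(\rho\grad\Phi).
\]
Test \eqref{def:weak_solution_NPNS_2} with $\Ss_q^2\rho$ and \eqref{def:weak_solution_NPNS_3} with $\Ss_q^2\s$, after the usual time regularization described in the Remark following Definition~\ref{def:weak_solution_NP_Euler}. Since $\Ss_q$ is self-adjoint and commutes with derivatives, this gives
\begin{align*}
&\haf\twonorm{\Ssq\rho(t)}^2+\haf\twonorm{\Ssq\s(t)}^2+\int_0^t\left(\twonorm{\Ssq\grad\rho}^2+\twonorm{\Ssq\grad\s}^2\right)ds\\
&=\haf\twonorm{\Ssq\rho_0}^2+\haf\twonorm{\Ssq\s_0}^2+\int_0^t\Ltwoinner{\Ssq(u\rho-\s\grad\Phi),\Ssq\grad\rho}ds+\int_0^t\Ltwoinner{\Ssq(u\s-\rho\grad\Phi),\Ssq\grad\s}ds.
\end{align*}
Everything here is finite for fixed $q$, because $\Ssq$ is smoothing and $\rho,\s\in L^\infty_tL^2_x$.

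The second step bounds the fluxes uniformly in $q$. By Hölder with $\frac1q+\frac1s=\haf$ and $\frac1p+\frac1r=\haf$,
\[
\|u\rho\|_{L^2_tL^2_x}\le\|u\|_{L^p_tL^q_x}\|\rho\|_{L^r_tL^s_x}.
\]
The same bound holds for $u\s$, $\s\grad\Phi$ and $\rho\grad\Phi$, using the assumption $\grad\Phi\in L^p_tL^q_x$. The operators $\Ssq$ are uniformly bounded on $L^2$ (their symbols are bounded by one), so Cauchy--Schwarz and Young give
\[
\left|\int_0^t\Ltwoinner{\Ssq F,\Ssq\grad\rho}ds\right|\le \|F\|_{L^2_{t,x}}^2+\frac14\int_0^t\twonorm{\Ssq\grad\rho}^2ds,
\]
and similarly for the $\s$-term. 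Absorbing the dissipative pieces yields
\[
\sup_q\int_0^T\left(\twonorm{\Ssq\grad\rho}^2+\twonorm{\Ssq\grad\s}^2\right)ds\le C\left(\twonorm{\rho_0}^2+\twonorm{\s_0}^2+\|u\|^2_{L^p_tL^q_x}+\|\grad\Phi\|^2_{L^p_tL^q_x}\right)\left(\|\rho\|^2_{L^r_tL^s_x}+\|\s\|^2_{L^r_tL^s_x}\right)<\infty.
\]
Monotone convergence on the Fourier side (Plancherel, with $\varphi$-partition weights increasing to one) then gives $\grad\rho,\grad\s\in L^2(0,T;L^2)$, which is the desired parabolicity.

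The main obstacle is justifying $\Ss_q^2\rho$ as a test function. The time regularization requires the time derivative of $\Ssq\rho$ to make sense. It does: from the weak formulation, $\p_t\Ssq\rho=\Ssq(\cdots)$ lies in $L^1_t$ with values in smooth functions, since each of $u\rho$, $\s\grad\Phi$, $\rho$ is at least $L^1_tL^1_x$ and $\Ssq$ maps these into $C^\infty$ with $q$-dependent constants. The endpoint cases $p=\infty$ or $q=\infty$ cause no trouble, because Hölder still applies. I therefore expect this step to be routine, in the same way as in Section~\ref{sec:NP_Euler_Energy_equality_condition}.
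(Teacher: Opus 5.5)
Your proposal is correct, and it is a simpler route than the paper's. The paper works with the species densities $\th,\Theta$ instead of $\rho,\s$. It tests the $\th$-equation with $\Ss_j^2\th$ and bounds the transport flux $\Ssj(\th u)$ by H\"older exactly as you do. It then splits the drift flux as $\big(\Ssj(\th\grad\Phi)-\Ssj\th\,\Ssj\grad\Phi\big)+\Ssj\th\,\Ssj\grad\Phi$. The commutator is handled by Lemma~\ref{lem:product_commutator}, which shows it tends to zero in $L^2_tL^2_x$. The second piece is rewritten through $-\Delta\Phi=\rho$ as $\haf\Ltwoinner{|\Ssj\th|^2,\Ssj\rho}$ and absorbed using the Ladyzhenskaya inequality and $\rho\in L^\infty_tL^2_x$. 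The same is done for $\Theta$, and parabolicity follows from $\rho=\Theta-\th$ and $\s=\Theta+\th$. That decomposition is inherited from the Euler-case proof of Theorem~\ref{Thm:non-negativity_NPEuler}, where it is designed to produce the exact species identity \eqref{proof:Thm_non-negativity_NPEuler_theta_energy} in the limit.

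For parabolicity alone only a uniform bound is needed. You correctly observe that $\s\grad\Phi$ and $\rho\grad\Phi$ lie in $L^2_{t,x}$ by the same H\"older pairing as $u\rho$ and $u\s$. This makes the commutator lemma, its convergence to zero, the elliptic rewriting, and the Ladyzhenskaya absorption all unnecessary. Even within the paper's splitting, boundedness of $K_2$ already follows from the uniform $L^s$- and $L^q$-boundedness of $\Ssj$. Your justification of $\Ss_q^2\rho$ as a test function, via Lipschitz-in-time Fourier coefficients, is also fine.

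Minor corrections:
\begin{itemize}
\item In your final display the initial-data term should enter additively, not multiplied by $\|\rho\|^2_{L^r_tL^s_x}+\|\s\|^2_{L^r_tL^s_x}$.
\item Since $\chi$ is not assumed radially nonincreasing, the weights $\chi(k/\l_{q+1})^2$ need not increase with $q$, so pass to the limit with Fatou's lemma rather than monotone convergence.
\item Rename the dyadic index, since $q$ is already the spatial Lebesgue exponent.
\end{itemize}
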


To prove the theorem above, we invoke the following standard product commutator estimate.
\begin{Lemma}\label{lem:product_commutator}
Let $1\le p,q,r,s\le\infty$ satisfy
\[
\frac1p+\frac1r=\frac12,
\qquad
\frac1q+\frac1s=\frac12.
\]
If $f\in L^r(0,T;L^s(\T^3))$ and $g\in L^p(0,T;L^q(\T^3))$, then
\[
\Ss_j(fg)-\Ss_j f\,\Ss_j g\longrightarrow0
\quad\text{strongly in }L^2(0,T;L^2(\T^3)).
\]
\end{Lemma}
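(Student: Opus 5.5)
We prove Lemma~\ref{lem:product_commutator} with the standard ``dense subclass plus uniform bound'' argument: first show that the bilinear map $(f,g)\mapsto \Ss_j(fg)-\Ss_j f\,\Ss_j g$ is bounded from $L^r_tL^s_x\times L^p_tL^q_x$ into $L^2_tL^2_x$ uniformly in $j$, then check convergence to zero on smooth functions, and conclude by density. One caveat: when $r=\infty$ or $p=\infty$, smooth functions are not dense in the corresponding mixed space. In those cases we need a separate treatment of that variable, discussed below.

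\emph{Uniform bound.} By \eqref{eq:q_mode_cutoff}, $\Ss_j$ is convolution with $\l_{j+1}^3\widetilde h(\l_{j+1}\cdot)$. Its $L^1$ norm equals $\|\widetilde h\|_{L^1}$, independent of $j$. Hence $\Ss_j$ is bounded on every $L^m(\T^3)$, $1\le m\le\infty$, uniformly in $j$. H\"older's inequality in space, with $\frac1s+\frac1q=\frac12$, and then in time, with $\frac1r+\frac1p=\frac12$, gives
\[
\|\Ss_j(fg)-\Ss_jf\,\Ss_jg\|_{L^2_tL^2_x}\les \|f\|_{L^r_tL^s_x}\|g\|_{L^p_tL^q_x},
\]
with a constant independent of $j$.

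\emph{Convergence on a dense class.} Suppose $f_n,g_n$ are smooth, say trigonometric polynomials in $x$ with coefficients continuous in $t$. Then $\Ss_j f_n=f_n$ and $\Ss_j g_n=g_n$ once $j$ exceeds their spectral degree. The product $f_ng_n$ also has bounded spectrum, so the commutator vanishes identically for $j$ large. Now write
\[
\Ss_j(fg)-\Ss_jf\Ss_jg=\big[\Ss_j(f_ng_n)-\Ss_jf_n\Ss_jg_n\big]+\text{(terms involving } f-f_n\text{ or } g-g_n).
\]
The remainder terms are bilinear expressions controlled by the uniform bound, so they are $\les \|f-f_n\|\,\|g\|+\|f_n\|\,\|g-g_n\|$. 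Choose $n$ so this is below $\varepsilon$, then choose $j$ large; this gives the claim. This works whenever $r,s,p,q<\infty$.

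\emph{Main obstacle: the endpoint exponents.} If $s=\infty$ or $r=\infty$, approximation in $L^r_tL^s_x$ fails. When $s=\infty$, hence $q=2$, I would not approximate $f$ at all. Instead, split the commutator as
\[
\Ss_j(fg)-\Ss_jf\,\Ss_jg=\big[\Ss_j(fg)-fg\big]+\big(f-\Ss_jf\big)g+\Ss_jf\,\big(g-\Ss_jg\big).
\]
The first term tends to zero in $L^2_tL^2_x$ because $fg\in L^2_tL^2_x$ and $\Ss_j\to \mathrm{Id}$ strongly on $L^2$, with dominated convergence in $t$. The third term tends to zero by $\|\Ss_jf\|_{L^\infty}\les\|f\|_{L^\infty}$ together with $g-\Ss_jg\to0$ in $L^p_tL^2_x$, provided $p<\infty$. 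The middle term is the delicate one: $f-\Ss_jf$ need not tend to zero in $L^\infty$. Here I would approximate $g$ instead of $f$ by smooth $g_n$. For smooth $g_n$, the factor $f-\Ss_jf\to0$ in $L^2_x$ pointwise in $t$, and dominated convergence gives $(f-\Ss_jf)g_n\to0$. Then let $n\to\infty$ using the uniform bound.

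Symmetric reasoning handles $q=\infty$, and the time endpoints $r=\infty$ or $p=\infty$ are treated the same way using dominated convergence in $t$. The case $s=q=\infty$ is excluded by $\frac1q+\frac1s=\frac12$; likewise, the time exponents $r$ and $p$ cannot both be infinite. So in every case at least one factor can be approximated by smooth functions, and the argument closes.
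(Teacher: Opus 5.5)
\textbf{Verdict.} Your proof is correct in outline and takes a genuinely different route from the paper's. One closing claim is false in a mixed endpoint case, but the repair is short.

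\textbf{Comparison with the paper.} The paper writes the commutator via the Constantin--E--Titi identity as $r_j(f,g)-(f-\Ss_jf)(g-\Ss_jg)$. It kills both pieces by continuity of translations: Minkowski's inequality in $y$, the rescaling $y=\l_{j+1}^{-1}z$, and dominated convergence in $z$. Its endpoint case is handled by dominated convergence in time. You instead use only two soft facts: uniform $L^m$-boundedness of $\Ss_j$, and the fact that $\Ss_j$ eventually acts as the identity on trigonometric polynomials. These, plus density, do the work. Your three-term split also makes clear that no cancellation is involved, since $\Ss_j(fg)$ and $\Ss_jf\,\Ss_jg$ each converge to $fg$ separately. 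That is more elementary and shows the lemma is not really a commutator estimate. The paper's route stays consistent with the quantitative translation-difference machinery of its Onsager analysis, and would give rates under Besov regularity, but that is more than this lemma needs.

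\textbf{The point to fix.} It is not true that in every case at least one factor can be approximated in norm by smooth functions. Take $(r,s,p,q)=(2,\infty,\infty,2)$, or its mirror $(\infty,2,2,\infty)$. Then $f\in L^2_tL^\infty_x$ and $g\in L^\infty_tL^2_x$, and smooth functions are dense in neither space. In exactly this case two of your steps fail as written: the third term (you assume ``provided $p<\infty$''), and the middle term (you approximate $g$ by $g_n$ in $L^p_tL^2_x$ and then use the uniform bound).

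\textbf{The repair.} Argue at fixed time, which is what your phrase ``dominated convergence in $t$'' has to mean and what the paper does.
\begin{enumerate}
\item For a.e.\ $t$, you have $f(t)\in L^s_x$ and $g(t)\in L^q_x$ with $s,q$ not both infinite. Apply your uniform bound together with spatial density in the finite-exponent factor. For a band-limited $g_n$ and large $j$, the commutator $\Ss_j(fg_n)-\Ss_jf\,g_n$ tends to $0$ in $L^2_x$, since $\Ss_j\to\mathrm{Id}$ on $L^2$. Hence $\|\Ss_j(fg)(t)-\Ss_jf(t)\,\Ss_jg(t)\|_{L^2_x}\to0$.
\item The square of this quantity is dominated by $C\|f(t)\|_{L^s_x}^2\|g(t)\|_{L^q_x}^2$. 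This lies in $L^1(0,T)$ because $2/r+2/p=1$.
\item Dominated convergence in $t$ concludes.
\end{enumerate}
This fixed-$t$ reduction covers all exponent cases at once, so your case analysis becomes unnecessary.
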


\begin{proof}
Let
\[
K_j(y):=\l_{j+1}^3\widetilde h(\l_{j+1}y),\qquad
\delta_y f(x,t):=f(x-y,t)-f(x,t).
\]
The product identity
\[
\Ss_j(fg)=\Ss_j f\,\Ss_j g+r_j(f,g)-(f-\Ss_jf)(g-\Ss_jg),
\]
where
\[
r_j(f,g)(x,t)=\intthree K_j(y)\,\delta_y f(x,t)\,\delta_y g(x,t)\,dy,
\]
shows that it is enough to prove
\[
r_j(f,g)\to0,
\qquad
(f-\Ss_jf)(g-\Ss_jg)\to0
\]
strongly in $L^2(0,T;L^2(\T^3))$.

We first assume that $r,s<\infty$. By H\"older's inequality,
\begin{align*}
\|r_j(f,g)\|_{L^2_tL^2_x}
&\le \intthree |K_j(y)|\,
\|\delta_y f\|_{L^r_tL^s_x}\,
\|\delta_y g\|_{L^p_tL^q_x}\,dy  \\
&\les \int_{\mathbb R^3}|\widetilde h(z)|\,
\|\delta_{\l_{j+1}^{-1}z} f\|_{L^r_tL^s_x}\,
\|\delta_{\l_{j+1}^{-1}z} g\|_{L^p_tL^q_x}\,dz .
\end{align*}
For each fixed $z$, the translation continuity in $L^r(0,T;L^s(\T^3))$ and in $L^p(0,T;L^q(\T^3))$ gives
\[
\|\delta_{\l_{j+1}^{-1}z} f\|_{L^r_tL^s_x}\to0,
\qquad
\|\delta_{\l_{j+1}^{-1}z} g\|_{L^p_tL^q_x}\to0,
\]
as $j \to \infty$. Moreover the integrand is bounded by
\[
4|\widetilde h(z)|\,
\|f\|_{L^r_tL^s_x}\|g\|_{L^p_tL^q_x},
\]
which is integrable in $z$. Hence $r_j(f,g)\to0$ in $L^2_tL^2_x$ by dominated convergence. Similarly,
\begin{align*}
\|(f-\Ss_jf)(g-\Ss_jg)\|_{L^2_tL^2_x}
&\le \|f-\Ss_jf\|_{L^r_tL^s_x}
   \|g-\Ss_jg\|_{L^p_tL^q_x}
\to0,
\end{align*}
because $\Ss_j\to I$ strongly on mixed Lebesgue spaces with finite exponents.

It remains to consider the cases in which one exponent is infinite. Unlike the finite exponents case, the difference in the $L^\infty$ norm need not converge to zero. Instead, it is enough to show that it is uniformly bounded while the paired finite $L^2$-type difference converges to zero. Assume first that $q=\infty$ and $s=2$. For fixed $z$, using the boundedness of translations in $L^\infty_x$ we have
\begin{align*}
\|\delta_{\l_{j+1}^{-1}z} f\|_{L^2_x}^2
  \|\delta_{\l_{j+1}^{-1}z} g\|_{L^\infty_x}^2  \le 16\|f\|_{L^2_x}^2\|g\|_{L^\infty_x}^2.
\end{align*}
The right-hand side is integrable in time since
$f\in L^r_tL^2_x$, $g\in L^p_tL^\infty_x$, and $1/r+1/p=1/2$.
For a.e. $t$, translations are continuous in $L^2_x$, so
\[
\|\delta_{\l_{j+1}^{-1}z} f(t)\|_{L^2_x}\to0.
\]
Therefore, by dominated convergence in time,
\[
\left(\int_0^T
\|\delta_{\l_{j+1}^{-1}z} f\|_{L^2_x}^2
\|\delta_{\l_{j+1}^{-1}z} g\|_{L^\infty_x}^2\,dt\right)^{1/2} \to	0.
\]
This quantity is bounded by
$4\|f\|_{L^r_tL^2_x}\|g\|_{L^p_tL^\infty_x}$, so dominated convergence in $z$ gives
$r_j(f,g)\to0$ in $L^2_tL^2_x$. For the second term,
\begin{align*}
\|(f-\Ss_jf)(g-\Ss_jg)\|_{L^2_tL^2_x}^2
&\le C\int_0^T
\|f-\Ss_jf\|_{L^2_x}^2\|g\|_{L^\infty_x}^2\,dt
\to0,
\end{align*}
again by dominated convergence, since $\Ss_j f(t)\to f(t)$ in $L^2_x$ for a.e. $t$ and
\[
\|f-\Ss_jf\|_{L^2_x}^2\|g\|_{L^\infty_x}^2
\le C\|f\|_{L^2_x}^2\|g\|_{L^\infty_x}^2\in L^1(0,T).
\]
The case $s = \infty$ follow by exchanging $f$ and $g$. This completes the proof.
\end{proof}

\begin{proof}[Proof of Theorem \ref{Thm:Regularity_NPNS}]
Using the equation \eqref{lemma:non-negativity_smooth_NPEuler_1}, which remains valid for the NPNS system, and testing it with $\Ss_j^2 \th$, we obtain
\begin{align}
    &\haf \twonorm{\Ss_j \th (t)}^2 - \haf \twonorm{\Ss_j \th_0}^2 +  \int_0^t \twonorm{\Ss_j \nabla \th(s)}^2 ds = \int_0^t \Ltwoinner{\Ss_j \left( (\th u) + \th \grad \Phi \right), \Ss_j \grad \th} ds \notag \\
    =\ &\int_0^t \Ltwoinner{\Ssj (\th u), \Ssj \grad \th} ds + \int_0^t \Ltwoinner{\Ssj (\th \grad \Phi) - \Ssj \th \Ssj \grad \Phi, \Ssj \grad \th} ds  + \haf \int_0^t \Ltwoinner{|\Ssj \th|^2, \Ssj \rho} ds. \label{proof:Thm_Regularity_NPNS_1}
\end{align}
Using the Ladyzhenskaya inequality, we bound the last term of the right-hand side of \eqref{proof:Thm_Regularity_NPNS_1} by
\begin{align*}
 \int_0^t \Ltwoinner{|\Ssj \th|^2, \Ssj \rho} ds &\leq  \int_0^t \twonorm{\Ssj \rho} \fournorm{\Ssj \th}^2 ds \\
&\leq  C\int_0^t \twonorm{\Ssj \rho}\left(\twonorm{\Ssj \th}^\haf \twonorm{\Ssj \grad \th}^{\frac{3}{2}}+\twonorm{\Ssj\th}^2\right) ds \\
&\leq C \int_0^t \left(1+\twonorm{\Ssj \rho}^4\right) \twonorm{\Ssj \th}^2 ds + \frac14 \int_0^t \twonorm{\Ssj \grad \th}^2 ds.
\end{align*}
By the Cauchy-Schwarz and Young inequalities, 
\eqref{proof:Thm_Regularity_NPNS_1} becomes
\begin{align}
    &\twonorm{\Ss_j \th (t)}^2 - \twonorm{\Ss_j \th_0}^2 +  \int_0^t \twonorm{\Ss_j \nabla \th(s)}^2 ds \notag \\
    \leq \ &C\int_0^t \twonorm{\Ssj (\th u)}^2 ds + C \int_0^t \twonorm{\Ssj (\th \grad \Phi) - \Ssj \th \Ssj \grad \Phi}^2 ds \notag \\
    +\ &C \int_0^t \left(1+\twonorm{\Ssj \rho}^4\right) \twonorm{\Ssj \th}^2 ds
    =: K_1 + K_2 + K_3. \label{proof:Thm_Regularity_NPNS_2}
\end{align}
One can bound $K_1$ simply by
\begin{align}
 K_1\les \int_0^t \|\theta u\|_{L^2}^2 ds \les \|\theta\|_{L^r_t L^s_x}^2\|u\|_{L^p_tL_x^q}^2 < \infty.\label{proof:Thm_Regularity_NPNS_3}
\end{align} The same argument remains valid if either $q$ or $s$ is equal to infinity. For $K_2$, Lemma \ref{lem:product_commutator} applied to $(\th,\nabla\Phi)$, together with
\[
\th\in L^r(0,T;L^s),\qquad \nabla\Phi\in L^p(0,T;L^q),\qquad
\frac1r+\frac1p=\frac12,\quad \frac1s+\frac1q=\frac12,
\]
gives
\begin{equation}\label{proof:Thm_Regularity_NPNS_4}
\limsup_{j \to \infty} K_2 = 0.
\end{equation}
For $K_3$, we have
\begin{equation}\label{proof:Thm_Regularity_NPNS_5}
\limsup_{j \to \infty} K_3 \le C\int_0^t \left(1+\twonorm{\rho}^4\right) \twonorm{\th}^2 ds < \infty .
\end{equation}
Finally, rearranging \eqref{proof:Thm_Regularity_NPNS_2} and sending $j \to \infty$, we conclude that
\begin{align*}
   \limsup_{j \to \infty}  \int_0^t \twonorm{\Ssj \nabla \th(s)}^2 ds < \infty.
\end{align*}
This implies that $\th\in L^2(0,T;H^1(\T^3))$. Applying the same argument to
\eqref{lemma:non-negativity_smooth_NPEuler_2}, one can get that $\Theta\in L^2(0,T;H^1(\T^3))$. Since $\rho=\Theta-\th$ and $\s=\Theta+\th$, it follows that $\rho,\s\in L^2(0,T;H^1(\T^3))$ and thus the solution is parabolic.
\end{proof}

\begin{Corollary}\label{Coro:non-negativity_NPNS}
Let $(u,\rho,\s)$ be a weak solution of of \eqref{eq:NPNS} on $[0,T]$. Suppose that
\[
\rho, \s \in L^r (0,T; L^s(\T^3))
\]
for some $r,s$ satisfying
\begin{equation}\label{Coro:non-negativity_NPNS_1}
\frac{2}{r} + \frac{3}{s} =1,
\end{equation}
where $2 \leq r \leq \infty$ and $3 \leq s \leq \infty$. Then the solution is parabolic. Furthermore, the ion concentrations $\th$ and $\Theta$ defined in \eqref{def:ion_concentration} remain non-negative provided that their initial data are non-negative.
\end{Corollary}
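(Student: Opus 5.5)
The plan is to reduce the first claim to Theorem~\ref{Thm:Regularity_NPNS}, and then repeat the truncation argument of Lemma~\ref{lemma:non-negativity_smooth_NPEuler} with the velocity taken only in the Leray class.

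\emph{Step 1: choice of exponents.} Given $(r,s)$ with $\frac2r+\frac3s=1$, $2\le r\le\infty$, $3\le s\le\infty$, define $p,q$ by
\[
\frac1p:=\frac12-\frac1r,\qquad \frac1q:=\frac12-\frac1s .
\]
Then \eqref{Thm:Regularity_NPNS_1} holds by construction. Since $r\ge2$ we get $p\in[2,\infty]$, and since $s\ge3$ we get $q\in[2,6]$. Moreover
\[
\frac2p+\frac3q=1+\frac32-\Big(\frac2r+\frac3s\Big)=\frac32 .
\]

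\emph{Step 2: verifying the hypotheses of Theorem~\ref{Thm:Regularity_NPNS}.} For the velocity, Definition~\ref{def:weak_solution_NPNS} gives $u\in C_w([0,T];L^2)\cap L^2([0,T];H^1)$, hence $u\in L^\infty L^2\cap L^2L^6$. Interpolating, $\|u\|_{L^q}\le\|u\|_{L^2}^{1-\alpha}\|u\|_{L^6}^{\alpha}$ with $\alpha=\frac32-\frac3q=\frac2p$, so $u\in L^pL^q$ exactly on the line $\frac2p+\frac3q=\frac32$. The endpoints $(p,q)=(\infty,2)$ and $(2,6)$ are trivial.

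For the potential, elliptic regularity and $\rho\in L^\infty L^2$ give $\nabla\Phi\in L^\infty H^1\subset L^\infty L^6$. Because $q\le6$ and both $[0,T]$ and $\T^3$ have finite measure, this yields $\nabla\Phi\in L^pL^q$. Theorem~\ref{Thm:Regularity_NPNS} then shows the solution is parabolic.

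\emph{Step 3: non-negativity.} With $\th\in L^2H^1$, I will show $\p_t\th\in L^2(0,T;H^{-1})$ from \eqref{lemma:non-negativity_smooth_NPEuler_1}, written as $\p_t\th=\Delta\th-\divv(u\th)-\divv(\th\nabla\Phi)$. The fluxes $u\th$ and $\th\nabla\Phi$ lie in $L^2_tL^2_x$ by the same H\"older pairing used for $K_1$ in the proof of Theorem~\ref{Thm:Regularity_NPNS}. The Lions--Magenes chain rule for the convex truncation then permits testing with $-\th^-$. The transport term $\int u\cdot\nabla\th\,\th^-\,dx$ vanishes: its integrand is integrable since $u\th^-\in L^2_tL^2_x$ and $\nabla\th^-\in L^2_tL^2_x$, and it equals $-\haf\int u\cdot\nabla|\th^-|^2\,dx=0$ after an approximation using $\divv u=0$. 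Using $-\Delta\Phi=\rho$, the drift term becomes $\haf\int\rho|\th^-|^2\,dx$. This is handled via Ladyzhenskaya and Young together with $\rho\in L^\infty L^2$, exactly as in the proof of Lemma~\ref{lemma:non-negativity_smooth_NPEuler}. Gr\"onwall with $\th_0^-=0$ gives $\th^-\equiv0$, and $\Theta$ is treated identically with the opposite drift sign.

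The main obstacle I expect is this justification of the truncation test with a merely Leray--Hopf velocity: the cancellation of the transport term and the $H^{-1}$ duality must be checked using only the integrability of $u\th$ produced in Step 1.
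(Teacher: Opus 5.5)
Your proposal is correct and follows essentially the same route as the paper. You use the same exponent choice $p=\frac{2r}{r-2}$, $q=\frac{2s}{s-2}$ together with interpolation of the Leray--Hopf and elliptic bounds to reduce parabolicity to Theorem~\ref{Thm:Regularity_NPNS}, and then the truncation argument of Lemma~\ref{lemma:non-negativity_smooth_NPEuler}. Your use of $u\th,\ \th\nabla\Phi\in L^2_tL^2_x$ to get $\p_t\th\in L^2(0,T;H^{-1})$ is a more careful version of the ``density argument'' the paper only cites: with merely $u\in L^4_tL^3_x$, the bound in that lemma would give only $L^{4/3}_tH^{-1}$.
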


\begin{proof}
Recall that from Definition~\ref{def:weak_solution_NPNS},
\[
u \in L^\infty([0,T];L^2(\T^3)) \cap L^2 ([0,T];H^1(\T^3)),\quad \grad \Phi \in L^\infty([0,T];H^1(\T^3)).
\]
By the interpolation inequality we deduce that
\begin{equation*}
u, \grad \Phi \in L^p([0,T];L^q(\T^3))
\end{equation*}
for all $p$ and $q$ satisfying
\begin{equation}\label{Proof:Coro_non-negativity_NPNS_1}
\frac{2}{p} + \frac{3}{q} = \frac{3}{2} \ \ \ \text{and} \ \ \ 2 \leq q \leq 6.
\end{equation}
In particular, if $r$ and $s$ are as in \eqref{Coro:non-negativity_NPNS_1}, then the pair $(p,q)$, given with the usual convention at the endpoints by
\[
p = \frac{2r}{r-2} \ \ \ \text{and} \ \ \ q = \frac{2s}{s-2},
\]
satisfies \eqref{Proof:Coro_non-negativity_NPNS_1}. Moreover, \eqref{Thm:Regularity_NPNS_1} is satisfied by the corresponding pairs $(p,r)$ and $(q,s)$. Hence, by Theorem \ref{Thm:Regularity_NPNS}, the weak solution $(u,\rho,\s)$ is parabolic. As $u \in L^2([0,T];H^1(\T^3))$, the non-negativity of the ion concentrations follows by a density argument and the same truncation proof as in Lemma \ref{lemma:non-negativity_smooth_NPEuler}, applied separately to $\th^-$ and $\Theta^-$ in the equations \eqref{lemma:non-negativity_smooth_NPEuler_1} and \eqref{lemma:non-negativity_smooth_NPEuler_2}.
\end{proof}

\section*{Acknowledgments}

R.H. acknowledges partial support from the Simons Foundation (MP-TSM-00002783), ONR grant \#N00014-24-1-2432, and NSF grant DMS-2420988. Q.L. acknowledges partial support from the Simons Foundation (SFI-MPS-TSM-00013384).

\bibliographystyle{plain}
\bibliography{NPEuler_Onsager}

\end{document}